\documentclass[12pt]{article}
\usepackage{graphicx,amsmath,amssymb,amsthm,rotating,stmaryrd,datetime,pifont,booktabs,enumitem}
\usepackage[mathscr]{eucal}

\newtheorem{theorem}{Theorem}

\newtheorem{result}[theorem]{Result}
\newtheorem{lemma}[theorem]{Lemma}

\newtheorem{construction}[theorem]{Construction}

\usepackage{setspace}
\usepackage{enumitem}
\setlist{nolistsep}
\setlist{nosep}

\usepackage[usenames,dvipsnames]{color}
\newcommand\red[1]{{\color{red} #1}}

\renewcommand{\O}{\mathcal O}
\newcommand{\A}{\mathcal A}

\renewcommand{\H}{\mathcal{H}}
\renewcommand{\P}{\mathcal{P}}
\renewcommand{\S}{\mathcal{S}}

\newcommand{\K}{\mathcal{K}}
\newcommand{\N}{\mathcal{N}}

\newcommand{\R}{\mathcal{R}}
\newcommand{\C}{\mathcal{C}}
\newcommand{\D}{\mathcal{D}}

\newcommand{\li}{\ell_\infty}
\newcommand{\si}{\Sigma_\infty}

\newcommand{\F}{\mathcal{F}}
\newcommand{\PG}{{\textup{PG}}}
\newcommand{\AG}{{\textup{AG}}}

\newcommand{\PGammaL}{\textup{P}\Gamma{\textup {L}}}
\newcommand{\Fqq}{\mathbb{F}_{q^2}}

\newcommand{\Fqt}{\mathbb{F}_{\hspace*{-1mm}{q^{t}}}}
\newcommand{\Fq}{\mathbb{F}_{\hspace*{-.5mm}q}}

\DeclareMathOperator{\Nm}{N}

\begin{document}

\title{Enumerating  inherited  conics in Andr\'e planes of odd order}

\author{S.G. Barwick, Alice M.W. Hui and Wen-Ai Jackson}
\date{}
\maketitle

AMS code: 51E20

Keywords: projective geometry, conics, inherited arcs, Andr\'e replacements, Andr\'e planes, linear sets


\begin{abstract} The process of deriving the Desarguesian plane $\PG(2,q^2)$ to get the Hall plane is well known, and the problem of when a conic in $\PG(2,q^2)$  inherits to an arc in the Hall plane has been solved. In this article we look at the generalisation of replacing an Andr\'e net of  $\PG(2,q^t)$, $t\geq 3$ to construct an Andr\'e plane of order $q^t$. This article looks at the case where $q$ is odd and $t$ is prime, and determines when a conic in $\PG(2,q^t)$ inherits to an arc in an Andr\'e plane.
Further, the number of arcs in an Andr\'e plane that are inherited in this way is enumerated.
 \end{abstract}

\section{Introduction}

A \emph{$k$-arc} in a projective plane of order $q$ is a set of $k$ points, no three collinear. A $k$-arc is \emph{complete} if it is not contained in a $(k+1)$-arc. An arc has at most $q+1$ points if $q$ is odd, and at most $q+2$ points if $q$ is even.
A $(q+1)$-arc is called an \emph{oval}. Arcs and ovals are well studied in $\PG(2,q)$, and an interesting question is the existence of ovals in non-Desarguesian planes. A well studied theme has been starting with a conic in the Desarguesian plane $\PG(2,q^2)$ and using the process of derivation to construct inherited arcs in the Hall plane. Results on the effect of derivation on conics of $\PG(2,q^2)$ have been obtained in \cite{barmar}, \cite{BKNS}, \cite{Cher}, \cite{GlynnStein}, \cite{Korch1}, \cite{Korch2}, \cite{OKeefePasPen}, \cite{OKeefePas}, \cite{Szon}.
In this article we are interested in the case where $q$ is odd.
The complete results for the Hall plane when $q$ is odd are summarised in the next result. Recent work in \cite{BKNS} relates the secant case to interior points and gives the elegant statement in part 3.

\begin{result}\label{120}
Let $\C$ be a non-degenerate conic in $\PG(2,q^2)$, $q$ odd, $q>5$, and $\D$ a Baer subline of $\li$. Derive $\PG(2,q^2)$ using $\D$ to get the Hall plane $\H(\D)$. The affine points of $\C$ correspond to a set of affine points of $\H(\D)$ denoted by $\D(\C)$.
\begin{enumerate}
\item If $\C$ is tangent to $\li$, then $\D(\C)$ is not an arc of the Hall plane.
\item If $\C$ is exterior to $\li$, then $\D(\C)$ is not an arc of the Hall plane.
\item Suppose $\C$ is secant to $\li$ with $\C\cap\li=\{P,Q\}$.
\begin{enumerate}
\item   If $P,Q\not\in\D$, $P,Q$ are conjugate with respect to $\D$,  and the points in $\D$ are all interior to $\C$, then $\D(\C)$ is an arc of the Hall plane that can be completed to an oval.
\item If $P,Q\in\D$ and the points in $\D\setminus\{P,Q\}$ are all interior to $\C$, then $\D(\C)$ is a complete $(q^2-1)$-arc of the Hall plane.
\item  Otherwise $\D(\C)$ is not an arc of the Hall plane.
\end{enumerate}
\end{enumerate}
\end{result}

The process of deriving $\PG(2,q^2)$ using a Baer subline of $
\li$  to get the Hall plane can be generalised to performing an Andr\'e replacement of $\PG(2,q^t)$, $t\geq3$, using an Andr\'e set of $\li$ to get an Andr\'e plane of order $q^t$. This process of Andr\'e replacement is described in Section \ref{sec:back}. 

The main result of  this article is Theorem~\ref{200} which  generalises Result~\ref{120} to give a complete answer to when conics in $\PG(2,q^t)$, $q$ odd, $t$ prime, are inherited to arcs in Andr\'e planes. In particular, we show that parts 1 and 2 of Result \ref{120} generalise:  if $q$ odd is  large enough, then a conic that is tangent or exterior to $\li$ does not lead to an inherited arc of an Andr\'e plane.
The result when a conic is secant to $\li$ only partially generalises from Result \ref{120}(3), and there is  one case that gives an inherited arc -- namely
case (3a) of Result \ref{120} generalises to $\PG(2,q^t)$, $q$ odd, $t$ prime to give an inherited arc of an Andr\'e plane. However, in the general case of $\PG(2,q^t)$, $q$ odd, $t$ prime with  $t\geq3$, there are no other inherited arcs. In particular, case (3b) does not generalise; that is, if  $t\geq3$,  the case with $P,Q\in\D$ does not give an inherited arc.  This can be seen in the statement of
Theorem~\ref{200}(3) (with an explanation as to why we do not get an arc included in the proof).

The article is set out as follows. Section 2 describes the process of Andr\'e replacement. We use a set of points on $\li$ that we call an Andr\'e net -- this set has been considered in several different settings, and we give a short survey of the different names used for this set. While the process of Andr\'e replacement is well known,  we give a careful description of it in Section 2 in order to establish the notation, and to explain why the article focusses on the case where $t$ is prime.
Section 3 proves some preliminary results using the equivalence between Andr\'e sets and scattered $\Fq$-linear sets of pseudoregulus type. 

Section 4 proves the main result, namely  Theorem~\ref{200}, which completely determines when a conic of $\PG(2,q^t$), $q$ odd, $t$ prime, inherits to an arc of an Andr\'e plane.  
Finally, Section 5 counts inherited arcs:  Theorems~\ref{119} counts arcs in the Hall plane  that are inherited from conics of $\PG(2,q^2)$, $q$ odd; and
Theorem \ref{118t} counts  arcs in an   Andr\'e plane that are inherited from conics of $\PG(2,q^t)$, $q$ odd, $t\geq3$ prime.

The case when $q$ is even is considered in a companion paper \cite{BHJ-even}, where a partial answer is given to the questions of when conics in $\PG(2,q^t)$, $q$ even, $t$ prime, are inherited to arcs in Andr\'e planes.

\section{Background}\label{sec:back}

\subsection{The Bruck-Bose representation}

The finite field of prime power order $q$ is denoted $\Fq$, and we let $\Fq^*=\Fq\setminus\{0\}$.
The Bruck-Bose representation \cite{andr54,bruc69,bruc64,segre} of $\PG(2,q^t)$ in $\PG(2t,q)$  is well known. Let $\si$ be the hyperplane at infinity in $\PG(2t,q)$ and let $\S$ be a   $(t-1)$-spread in $\si$. The incidence structure $\A(\S)$ with points the points of $\PG(2t,q)\setminus\si$; lines the $t$-spaces of $\PG(2t,q)\setminus\si$ that contain an element of $\S$; and incidence being inclusion, is an affine plane. We can complete this to a projective plane denoted $\P(\S)$; points on the line at infinity $\li$ correspond to the elements of $\S$. Moreover, $\P(\S)\cong \PG(2,q^t)$ iff $\S$ is a regular (Desarguesian) spread.
If $\mathcal X$ is a set of points of $\PG(2,q^2)$, then we denote the corresponding set of points in $\PG(2t,q)$ by $[\mathcal X]$.

We will need to use coordinates for points on the line
$\li\cong\PG(1,q^t)$ and the corresponding spread elements in $\si\cong\PG(2t-1,q)$.
A point $P$ in $\PG(1,q^t)$  has homogeneous coordinates  $P=(x,y)$ for some $x,y\in\Fqt$, not both $0$.   This is an element of the vector space $V(2,q^t)$, so we can convert it into a vector in $V(2t,q)$, a process which is commonly called \emph{field reduction} and denoted $\F_{2,t,q}$. That is,  $\F_{2,t,q}:V(2,q^t)\longrightarrow V(2t,q)$, and we use the notation
$\F_{2,t,q}(x,y)=\langle(x,y)\rangle_{q}$.
(For example, construct $\Fqt$ as an  extension of $\Fq$ using a primitive element  $\tau$  with primitive
polynomial of degree $n$. So
 every element $x\in\Fqt$
can be uniquely written as $x=x_0+x_1\tau+\ldots+x_{t-1}\tau^{t-1}$ with
$x_i\in\Fq$. Then  $\langle(x,y)\rangle_{q}=(x_0,x_1,\ldots,x_{t-1},\,y_0,y_1,\ldots,y_{t-1})$.) 
The point $P=(x,y)\in\PG(1,q^t)$  has homogeneous coordinates $(sx,sy)$, for $s\in\Fqt^*$, so the corresponding spread element in $\PG(2t-1,q)$ is the    $(t-1)$-space with coordinates $[P]=\{\langle (sx,sy)\rangle_q\,|\,s\in\Fqt^*\}$.

 \subsection{Andr\'e nets}

The process of deriving the Desarguesian plane $\PG(2,q^2)$ to  construct  the Hall plane of order $q^2$  is well known. Derivation involves replacing certain lines of $\PG(2,q^2)$ with certain Baer subplanes, it  can be interpreted in the $\PG(4,q)$ Bruck-Bose setting  as a net replacement as follows. Let $\R$ be a regulus contained in $\S$ (a regular $1$-spread in $\si\cong\PG(3,q)$) and let $\R'$ denote the opposite regulus. Let  $\S'=(\S\setminus\R)\cup\R'$, then $\P(\S')$ is the Hall plane.
More generally, let $\S$ be a $(t-1)$-spread in $\PG(2t-1,q)$, and suppose there is a subset $\N\subset\S$ and a set $\N'$ of pairwise disjoint  $(t-1)$-spaces such that each element of $\N$ meets each element of $\N'$ in exactly one point, then we call $\N$ a \emph{net} with \emph{replacement net} $\N'$.
 If $\N$ is a net of $\S$ with replacement net $\N'$, then the set $\S'=(\S\setminus\N)\cup\N'$ is a spread of $\PG(2t-1,q)$, we refer to this process as
  \emph{replacing  $\N$ by $\N'$}.
 We are interested in certain nets of the regular $(t-1)$-spread, called Andr\'e nets. These have been studied in various settings and we start with a short survey giving the different names used in the different settings, and relating some of the known results.

Let $E,F$ be two distinct points of $\PG(1,q^t)$ and let $G$ be the collineation group acting on points of $\PG(1,q^t)$ that fixes $E$ and $F$ pointwise. As $G$ is Singer group of order $q^t-1$, $G$ has a unique Singer subgroup $H$ of order $q^{t-1}+\cdots+q+1$. The orbits of $H$ partition the  points of $\PG(1,q^t)\setminus\{E,F\}$ into $q-1$ sets  of size $q^{t-1}+\cdots+q+1$,  we call  these sets \emph{Andr\'e sets} with \emph{transversal points} $E,F$. Andr\'e sets have been investigated in several different settings. In the $\PG(2t-1,q)$  field reduction setting, an Andr\'e set corresponds to a set of $(t-1)$-spaces contained in a regular $(t-1)$-spread of $\PG(2t-1,q)$, so forms a partial spread. Andr\'e sets  are called \emph{Andr\'e nets} in \cite{andr54} and \cite{ostrom70}; and \emph{norm surfaces}
in \cite{bruck70}. Ostrom introduced the term   \emph{hyperregulus} in \cite{ostrom}, a    term which is used in   the translation plane literature, see \cite{john07}, and Andr\'e sets correspond  to  \emph{Andr\'e hyperreguli}.
In \cite{bruc73a,bruc73b}, Bruck developed the theory of circle geometries $C(t,q)$, $t$ prime, and Andr\'e sets are equivalent to \emph{covers of a circle geometry} with \emph{carriers $E,F$}.

 More recently, Andr\'e sets have been investigated in the linear set literature,  see \cite{lavr14a} for a survey article on linear sets.
 Let  $\Pi_k$ be a subspace of dimension $k$ in $\PG(2t-1,q)$, then the set $\{P\in\PG(1,q^t)\,|\, [P]\cap\Pi_k\neq\emptyset\}$ is called an \emph{$\Fq$-linear set of rank $k$}. If this set has size $q^{k+1}+\ldots+q+1$, then it is called a \emph{scattered} $\Fq$-linear set  of rank $k$.
Andr\'e sets are  scattered $\Fq$-linear sets of $\PG(1,q^t)$ of rank $t$.    The notion of an $\Fq$-linear set of $\PG(1,q^t)$ of  pseudoregulus type was introduced in \cite{LMPT14}, and Andr\'e sets are equivalent to $\Fq$-linear sets of $\PG(1,q^t)$ of  \emph{pseudoregulus type} with \emph{transversal points} $E,F$. In particular, an Andr\'e set of $\PG(1,q^t)$ is projectively equivalent to the set of points with homogeneous coordinates
 \begin{equation}
\label{e1}
\A=\{P_k=(1,k)\,|\,k\in\Fqt^*,\Nm(k)=1\}
\end{equation}
which has transversal points $E=(1,0)$, $F=(0,1)$; where $$\Nm(k)=k^{q^{t-1}+\cdots+q+1}$$ is the \emph{norm mapping} from $\Fqt$ to $\Fq$.
 Proofs of this equivalence can be found in multiple settings, for example in \cite{ostrom}, \cite[Remark 4.2]{LMPT14} and  \cite[Remark 2.2]{dona14}.
 Moreover, the $q-1$ Andr\'e sets with transversal points $(1,0)$, $(0,1)$ are the sets
  $\{(1,k)\,|\,k\in\Fqt^*,\Nm(k)=\delta\}$, $ \delta\in\Fq^*.$
If $t=3$, then every scattered $\Fq$-linear  set  of $\PG(1,q^3)$ of rank $3$ has pseudoregulus type (that is, is an  Andr\'e set), see \cite{lavr10}. If $t>3$, then there are scattered $\Fq$-linear  sets of $\PG(1,q^t)$ of rank $t$ which are not Andr\'e sets, for example, see \cite{JJ2008} in the translation plane literature, and     \cite{casa22,csaj,luna01,sheek2018} in the linear set literature, with a    survey given in \cite{grim}.

Using field reduction on the coordinates in (\ref{e1}), the Andr\'e set $\A$  corresponds to an \emph{Andr\'e net} in $\PG(2t-1,q)$, denoted $[\A]$, with coordinates given by
$$ [\A] =\{[P_k]=\{\langle(s,sk)\rangle_q\,|\, s\in\Fqt^*\}\ \,|\,k\in\Fqt^*, \Nm(k)=1\}.$$
In $\PG(2t-1,q)$, there are $t-1$ \emph{Andr\'e replacement nets} for $[\A]$, denoted $\A^i$, $i=1,\ldots,{t-1}$. Coordinates for these nets are given in multiple places in the above references, namely
\begin{equation}
\label{e2}
\A^i=\{X_{i,k}=\{\langle(s,s^{q^i}k)\rangle_q\,|\, s\in\Fqt^*\}\ \,|\,k\in\Fqt^*, \Nm(k)=1\}.
\end{equation}

\subsection{Andr\'e replacement}

The process of Andr\'e replacement involves replacing the regular $(t-1)$-spread $\S$ with the $(t-1)$-spread $(\S\setminus [\A])\cup\A^i$ for some $i\in\{1,\ldots,t-1\}$.
Recall that as $\S$ is regular, the plane $\P(\S)\cong \PG(2,q^t)$ is Desarguesian. The plane $\P(\S\setminus [\A]\cup\A^i)$ is an Andr\'e plane,  we abbreviate the notation $\P(\S\setminus [\A]\cup\A^i)$ and we denote this Andr\'e plane by  $\P(\A^i)$. The process of constructing the Andr\'e plane $\P(\A^i)$ from $\P(\S)\cong \PG(2,q^t)$ is called \emph{Andr\'e replacement}. The process was developed in  \cite{andr54} and \cite{ostrom70},  and the next statement carefully describes it.

\begin{construction}\label{cons1} {\bfseries\emph{(Andr\'e replacement)}} In $\PG(2,q^t)$, $t$ prime, $t\geq 3$, let $\D$ be an Andr\'e set   of $\li$. In the $\PG(2t,q)$ Bruck-Bose setting, $[\D]$ is an Andr\'e net of the regular spread $\S\subset\si$,   denote the $t-1$ Andr\'e replacement nets by $\D^i$, $i=1,\ldots,t-1$.
 For $i\in\{1,\ldots,t-1\}$, define  the incidence structure $\mathcal T_i$ as follows.
\begin{itemize}
\item The points of $\mathcal T_i$ are the
 points of $\AG(2,q^t)$
 \item The lines of $\mathcal T_i$ consist of
 \begin{itemize}
\item  the lines of $\AG(2,q^t)$ that meet $\li$ in a point not in $\D$,
\item the $\D^i$-blocks (a set of affine points $\mathcal X$ in $\PG(2,q^t)$ is called a \emph{$\D^i$-block} if in $\PG(2t,q)$, the corresponding set $[\mathcal X]$ is an affine $t$-space whose projective completion  meets $\si$ in a $(t-1)$-space that lies in the replacement net $\D^i$).
\end{itemize}
\item Incidence is inclusion
\end{itemize}
Then $\mathcal T_i$ is an affine plane of order $q^t$ whose projective completion is an Andr\'e plane denoted $\P(\D^i)$. This process is called an \emph{Andr\'e replacement with respect to the Andr\'e replacement net $\D^i$. }
\end{construction}

The important property that makes Andr\'e replacement work is that each $(t-1)$-space in the Andr\'e net $[\A]$ meets each $(t-1)$-space in the Andr\'e replacement net  $\A^i$ in exactly one point. Moreover, if $t$ is prime, then elements of distinct Andr\'e replacement nets meet in exactly one point (that is,  for distinct $i,j\in\{1,\ldots,t-1\}$,
every $(t-1)$-space in $\A^i$ meets every $(t-1)$-space in $\A^j$ in exactly one point).
This is not true in general. For example, Ostrom \cite{ostrom} shows that when $t=6$, and $q$ odd, it is possible to create a non-Andr\'e replacement net for $[\A]$ that consists of a mix of some $5$-spaces of $\A^1$ and some $5$-spaces of $\A^5$. That is, there exists $5$-spaces $\alpha\in\A^1$ and $\alpha'\in\A^5$ with $\alpha\cap\alpha'=\emptyset$ (and so  there exists $\beta\in\A^1$ and $\beta'\in\A^5$ with $\beta\cap\beta'$ containing a line).  More generally,  Johnson \cite{johnson2003} shows that when $t$ is composite, it is always possible to construct a non-Andr\'e replacement net for $[\A]$.
The set of points on the spaces in $[\A]$ form a surface which is studied in \cite{lavr15} in a different context, and the dimension of $X_{i,k}\cap X_{j,h}$ from (\ref{e2})  is explicitly computed in  \cite[Prop 7]{lavr15}.
  If $q\geq t$, then \cite[Thm 14]{lavr15} shows
the only $(t-1)$-spaces whose points are contained in the pointset covered by $[\A]$ are the $(t-1)$-spaces contained in one of the Andr\'e replacement nets, that is have form $X_{i,k}$ for some $i=1,\ldots,t-1$, $k\in\Fqt^*, \Nm(k)=1$.

In particular,  if $t$ is prime, then
 there are exactly $t-1$ replacement nets for the Andr\'e net $[\A]$, namely the Andr\'e replacement nets $\A^i$, $i=1,\ldots,t-1$  (and if $t$ is not prime, then there may be more than $t-1$ replacement nets).
These key properties form an important part of the proof of Theorem~\ref{M110}, and
give the motivation for considering  the case $t$ prime in this article.

%
%
%
%
%
%
%
%

\section{Preliminary Results}

In order to  determine when conics in $\PG(2,q^t)$  inherit to arcs of the Andr\'e plane, we need a number of preliminary results.  

\subsection{Intersection of two Andr\'e sets}

We give a bound on the number of points contained in the intersection of two Andr\'e sets. The following lemma  is a generalisation of the result on linear sets when $t=3$ given in  \cite[Lemma 2.3]{ferr03}.

\begin{lemma}\label{102t}
Let $\D_1$ and $\D_2$ be
two Andr\'e sets, (that is, scattered  $\Fq$-linear sets of  pseudoregulus type) of $\PG(1,q^t)$, $t\geq3$. Then $|\D_1\cap\D_2|\leq2(q^{t-2}+\cdots +q+ 1)$.
\end{lemma}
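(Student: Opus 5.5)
Throughout I assume $\D_1\neq\D_2$, since otherwise the bound fails. The plan is to pass to the $\PG(2t-1,q)$ field reduction setting and count points there. Write $\D_j=\{P\in\PG(1,q^t)\,|\,[P]\cap\Pi_j\neq\emptyset\}$, where $\Pi_j$ is a $(t-1)$-space of $\PG(2t-1,q)$, $j=1,2$. Because $\D_1$ is scattered, each $P\in\D_1$ gives exactly one point $x_P=[P]\cap\Pi_1$. So $|\D_1\cap\D_2|$ equals the number of points $x\in\Pi_1$ whose spread element $[x]$ meets $\Pi_2$. By the equivalence in (\ref{e1}), applied to $\D_1$ in the $\PGammaL$ sense (which preserves $\S$), I may take $\D_1=\A$ with transversal points $(1,0)$, $(0,1)$ and $\Pi_1=\{\langle (x,x^{q})\rangle_q\,|\,x\in\Fqt^*\}$; equivalently one could take the replacement-net form $\langle(s,s^{q^i}k)\rangle_q$. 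With this normalisation, $\D_1=\{(1,x^{q-1})\}$.

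Next I describe $\D_2$ through its own transversal points $E_2,F_2$. Choose $\Fqt$-linear forms $\ell_E,\ell_F$ on $V(2,q^t)$ vanishing at $E_2$ and $F_2$ respectively. Then $\D_2=\{P\,|\,\Nm(\ell_F(P))=\delta\,\Nm(\ell_E(P))\}$ for a suitable $\delta\in\Fq^*$, and this condition is well defined because $\Nm$ of a scalar cancels. Evaluating at $(x,x^q)$ gives two $q$-polynomials $f(x)=\alpha x+\beta x^q$ and $g(x)=\gamma x+\varepsilon x^q$. The intersection then corresponds to the points $\langle x\rangle_q$ of $\PG(t-1,q)\cong\Pi_1$ satisfying
$$\Nm(f(x))=\delta\,\Nm(g(x)).$$
This is a form of degree $t$ over $\Fq$ in the coordinates of $x$. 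It is not identically zero, since that would force $\D_1\subseteq\D_2$ and hence $\D_1=\D_2$.

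The goal is to show that this hypersurface of $\Pi_1$ has at most $2(q^{t-2}+\cdots+q+1)$ points, that is, at most as many as two hyperplanes of $\PG(t-1,q)$. I would split into cases according to how $\{E_2,F_2\}$ meets $\{E_1,F_1\}$.
\begin{itemize}
\item If both transversal points coincide, $\D_2$ is another orbit of the same Singer subgroup $H$, so $\D_1\cap\D_2=\emptyset$.
\item If exactly one coincides, say $E_2=E_1=(1,0)$, one of $f,g$ is a monomial. After a further normalisation by an element of $G$, the condition becomes $\Nm(1+\lambda x^{q-1})=\mu$, i.e.\ $\Nm(x+\lambda x^q)=\mu\Nm(x)$. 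I expect to bound this directly by counting, for each value $k=x^{q-1}$ with $\Nm(k)=1$, whether $\Nm(1+\lambda k)=\mu$.
\item In the generic case I would try to factor the equation over $\Fqt$ as $\prod_{i}f(x)^{q^i}-\delta\prod_i g(x)^{q^i}$, and use the $\Fq$-linearity of $f$ and $g$ to reduce it to counting $\Fq$-rational points.
\end{itemize}

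The main obstacle is the final counting step: showing that the hypersurface has no more points than a union of two hyperplanes. A general degree-$t$ bound of Serre type gives only $t\,q^{t-2}+\cdots$, which is too weak. What I would try first is a Ferret-style argument generalising the $t=3$ case in \cite[Lemma 2.3]{ferr03}. Consider the $\Fq$-linear map $\phi:x\mapsto f(x)/g(x)$ on the intersection points. Show that the points with $\Nm(\phi(x))=\delta$ lie in the union of the two $\Fq$-subspaces $\ker(f-\rho g)$, for the at most two values of $\rho$ that can arise from the transversal structure of $\D_2$. Each such kernel is a proper subspace, of projective dimension at most $t-2$, so each contributes at most $q^{t-2}+\cdots+q+1$ points, which gives the bound. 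If this factorisation idea fails, the fallback is a purely geometric count in $\PG(2t-1,q)$, using the fact that lines joining $x_P\in\Pi_1$ to $[P]\cap\Pi_2$ lie in spread elements. This yields a linear correspondence between subsets of $\Pi_1$ and $\Pi_2$ whose fixed-point sets are subspaces.
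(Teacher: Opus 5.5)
\textbf{There is a genuine gap: the step that actually produces the bound is never carried out, and the mechanism you propose for it is false.}

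Your reduction is sound. After normalising $\D_1=\A$, scatteredness identifies $\D_1\cap\D_2$ with the points of $\Pi_1\cong\PG(t-1,q)$ satisfying $\Nm(f(x))=\delta\,\Nm(g(x))$. You are right that $\D_1\neq\D_2$ must be assumed; the paper leaves this implicit. The case of two shared transversal points is also correct.

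The other two cases, however, are only stated as expectations. The map $x\mapsto f(x)/g(x)$ is not $\Fq$-linear, since it is constant on $\Fq^*$-multiples of $x$. More seriously, for each $\rho$ the map $f-\rho g\colon x\mapsto(\alpha-\rho\gamma)x+(\beta-\rho\varepsilon)x^q$ is a nonzero $q$-polynomial of $q$-degree at most $1$ (nonzero because $E_2\neq F_2$). So $\ker(f-\rho g)$ has $\Fq$-dimension at most $1$: it is at most \emph{one point} of $\Pi_1$, not a subspace of dimension $t-2$. Your containment would therefore give $|\D_1\cap\D_2|\le 2$, which is false. For example, take $q$ and $t$ odd. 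The $q-1$ Andr\'e sets $\{(1,k)\,|\,\Nm(k+1)=\mu\}$, $\mu\in\Fq^*$, cover $\D_1$ disjointly (using $\Nm(-1)=-1$), so one of them meets $\D_1$ in more than $q^{t-2}$ points. The geometric fallback is too vague to yield any bound.

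\textbf{The missing idea.} The paper's entire proof counts roots of a \emph{univariate} polynomial over $\Fqt$ in $\theta$ (your $k=x^{q-1}$), after using the norm condition to lower its degree. Put $s=q^{t-2}+\cdots+q+1$, so $\Nm(\theta)=\theta^{q^{t-1}+s}$. Write membership of $(1,\theta)$ in $\D_2$ as $\Nm(a\theta+b)=\Nm(c\theta+d)$ with $ad-bc\neq0$. Expand
\[\Nm(a\theta+b)=(a^{q^{t-1}}\theta^{q^{t-1}}+b^{q^{t-1}})(a\theta+b)^{s},\]
multiply by $\theta^{s}$, and substitute $\theta^{q^{t-1}+s}=1$. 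For $\Nm(\theta)=1$ the condition becomes
\[(a^{q^{t-1}}+b^{q^{t-1}}\theta^{s})(a\theta+b)^{s}=(c^{q^{t-1}}+d^{q^{t-1}}\theta^{s})(c\theta+d)^{s},\]
a polynomial equation of degree at most $2s$. If it were the zero polynomial, every $\theta$ of norm $1$ would satisfy the membership condition, forcing $\D_1\subseteq\D_2$ and hence $\D_1=\D_2$. Otherwise it has at most $2s$ roots, which is the bound.

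This handles every position of the transversal points at once, so no case split is needed. In particular, your Case 2 equation $\Nm(1+\lambda k)=\mu$ with $\Nm(k)=1$ is finished by exactly this substitution $k^{q^{t-1}}=k^{-s}$.
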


\begin{proof}
As all Andr\'e sets in $\PG(2,q^t)$, $t\geq3$ are projectively equivalent to $\A=\{(1,\theta)\,|\,\theta\in\Fqt^*,\Nm(\theta)=1\}$,
without loss of generality, let $\D_1=\A$ and
$\D_2=\left\{(1,\theta)\, \mid\,\theta\in\Fqt^*,\Nm\left(\frac{a \theta+b}{c\theta+d} \right) =1 \right\}$ for some $a,b,c,d \in \Fqt$ with $a d- c b \neq 0$.
To prove the result, it suffices to show that there are at most $2(q^{t-2}+\cdots +q+ 1)$ solutions $\theta\in\Fqt^*$ to the equations
$
\Nm(\theta)=1 \quad\textup{and}\quad
\Nm\left(\frac{a \theta+b}{c\theta+d} \right) =1.
$
The second equation gives $\Nm(a \theta+b)=\Nm(c \theta+d)$ and so
\[
\begin{split}
(a \theta+b)^{q^{t-1}} (a \theta+b)^{q^{t-2}+\cdots +q+ 1} &=
(c \theta+d)^{q^{t-1}} (c \theta+d)^{q^{t-2}+\cdots +q+ 1} \\
(a^{q^{t-1}} \theta^{q^{t-1}}+b^{q^{t-1}}) (a \theta+b)^{q^{t-2}+\cdots +q+ 1} &=
(c^{q^{t-1}} \theta^{q^{t-1}}+d^{q^{t-1}}) (c \theta+d)^{q^{t-2}+\cdots +q+ 1}.
\end{split}
\]
Multiplying both sides by $\theta^{q^{t-2}+\cdots +q+ 1} $ (as $\theta\neq0$) and then substituting $\Nm(\theta)=1$ gives
\[
\begin{split}
(a^{q^{t-1}} +b^{q^{t-1}}\theta^{q^{t-2}+\cdots +q+ 1}) (a \theta+b)^{q^{t-2}+\cdots +q+ 1} &=
(c^{q^{t-1}} +d^{q^{t-1}}\theta^{q^{t-2}+\cdots +q+ 1}) (c \theta+d)^{q^{t-2}+\cdots +q+ 1} .
\end{split}
\]
This is an equation in $\theta$ with degree at most $2({q^{t-2}+\cdots +q+ 1})$ and hence  the number of solutions is  at most $2({q^{t-2}+\cdots +q+ 1})$.
\end{proof}

\subsection{$\Fq$-lines contained in an Andr\'e set}

Recall that a $(t-1)$-regulus in $\PG(2t-1,q)$ is a set $\R$ of $q+1$ pairwise disjoint $(t-1)$-spaces  with the property that if a line meets three elements of $\R$, then it meets all elements of $\R$. Hence a  $(t-1)$-regulus is  ruled by a set of $q^{t-1}+\cdots+q+1$ (pairwise disjoint) lines.  We will use the following representations in the Bruck-Bose setting, a proof can be found in \cite{BJ2012}.

 \begin{result}\label{222}
\begin{enumerate}
\item Let $b$ be an $\Fq$-line contained in $\li$ in $\PG(2,q^t)$. Then in the $\PG(2t,q)$ Bruck-Bose representation, $[b]$ is a $(t-1)$-regulus contained in the regular spread $\S$. Conversely, every $(t-1)$-regulus contained in $\S$ corresponds to an $\Fq$-line of $\li$.
\item Let $\pi$ be an $\Fq$-plane of $\PG(2,q^t)$ that is secant to $\li$. Then in the $\PG(2t,q)$ Bruck-Bose representation, $[\pi]$ is a plane that meets $\si$ in a line. Conversely, every plane in $\PG(2t,q)$ that meets $\si$ in a line corresponds to an $\Fq$-plane  of $\PG(2,q^t)$ that is secant to $\li$.
\end{enumerate}
\end{result}

Let $\D$ be an  Andr\'e set, that is, a scattered $\Fq$-linear set of pseudoregulus type of $\PG(1,q^t)$, $t\geq3$,   $q\geq t$.
The number of $\Fq$-lines contained in  $\D$ is computed in \cite{csaj16}. Of particular interest is \cite[Prop 4.9]{csaj16} which shows that if $t$ is prime, then the number of $\Fq$-lines contained in $\D$ is $$(t-1)(q^{t-1}+\ldots+q+1)\frac{q^{t-1}-1}{q^2-1}.$$ If $t$ is not prime, then it follows from \cite[Thm 4.8]{csaj16} that the number of $\Fq$-lines contained in $\D$ is strictly less than this.
Moreover when $t$ is prime, \cite[Thm 5.13]{csaj16} discusses the representation of the $\Fq$-lines contained in $\D$: under field reduction these are projections of normal rational curves. This leads to  a description of $t-1$ families of $\Fq$-lines contained in $\D$. The following result   gives a way to label these $t-1$ families in our setting, and interprets and extends results from \cite{csaj16} to detail the properties and counting we need to study inherited arcs.

\begin{theorem}\label{M110} Let $\D$ be an Andr\'e set, (that is, a scattered $\Fq$-linear set of pseudoregulus type) of $\PG(1,q^t)$, $t\geq3$ prime, $q\geq t$.
\begin{enumerate}
\item\label{M110a} The $\Fq$-lines contained in $\D$ can be partitioned into $t-1$ families of size $(q^{t-1}+\ldots+q+1)(q^{t-1}-1)/(q^2-1)$. We can label the families by $\F_1,\ldots,\F_{t-1}$ so that for $i\in\{1,\ldots,t-1\}$, family $\F_i$ corresponds to
the  Andr\'e replacement net $\D^i$  in the  following way.
\begin{enumerate}
\item If $\ell$ is a line contained in one of  the $(t-1)$-spaces in  $\D^i$, then the set $\{P\in\li\,|\,[P]\cap\ell\neq\emptyset\}$  is an $\Fq$-line belonging to family $\F_i$.
\item If
$b$ is an $\Fq$-line in family $\F_i$, then  every ruling line of the $(t-1)$-regulus $[b]$ is contained in a unique
 $(t-1)$-space of $\D^i$.
 \end{enumerate}

\item\label{M110b}  Any two distinct points of $\D$ lie in exactly $t-1$ $\Fq$-lines of $\D$, one from each family.
\item\label{M110c} Each point of   $\D$ lies in exactly
$(t-1)(q^{t-2}+\cdots+q+1)$ $\Fq$-lines of $\D$; $(q^{t-2}+\cdots+q+1)$ in each family.
 \end{enumerate}
  \end{theorem}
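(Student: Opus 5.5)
Without loss of generality take $\D=\A=\{(1,k)\mid \Nm(k)=1\}$, with replacement nets $\A^i$ as in (\ref{e2}). The proof will use the Bruck--Bose correspondence of Result~\ref{222}(1): $\Fq$-lines of $\li$ correspond to $(t-1)$-reguli of $\S$, and each regulus is ruled by $q^{t-1}+\cdots+q+1$ lines of $\si$. It will also use the facts quoted in Section~2. Each element of $[\A]$ meets each element of $\A^i$ in exactly one point. For $t$ prime, elements of distinct $\A^i,\A^j$ meet in exactly one point. For $q\ge t$, \cite[Thm 14]{lavr15} says the only $(t-1)$-spaces inside the point set covered by $[\A]$ are the $X_{i,k}$. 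Finally, the total count of $\Fq$-lines in $\D$ is $(t-1)(q^{t-1}+\cdots+q+1)\frac{q^{t-1}-1}{q^2-1}$ by \cite[Prop 4.9]{csaj16}.

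\textbf{Step 1 (defining the families).} Let $\ell$ be a line of $\si$ lying in some $X_{i,k}\in\A^i$. The elements of $[\A]$ partition the point set of $X_{i,k}$, meeting it in one point each, so $\ell$ meets $q+1$ distinct elements of $[\A]$. The set $b_\ell=\{P\mid [P]\cap\ell\ne\emptyset\}$ is therefore an $\Fq$-linear set of rank 2 with $q+1$ points, that is, an $\Fq$-line contained in $\D$. This proves (a) once we define $\F_i$ as the set of such $b_\ell$ with $\ell\subset X\in\A^i$.

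\textbf{Step 2 (counting and (b)).} For an $\Fq$-line $b\subseteq\D$, the regulus $[b]$ is ruled by $q^{t-1}+\cdots+q+1$ pairwise disjoint lines, and their union is a $(t-1)$-regulus inside the point set of $[\A]$. I would show that the ruling lines through the points of one fixed element of $[b]$ span a transversal $(t-1)$-space: take the other regulus system, i.e.\ a $(t-1)$-space meeting every element of $[b]$. Since that space lies inside the point set of $[\A]$, \cite[Thm 14]{lavr15} forces it to be some $X_{i,k}$. This step is the main obstacle. A $(t-1)$-regulus with $t\ge3$ has no obvious ``opposite regulus'' of $(t-1)$-spaces; its transversal spaces are only lines. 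The first thing I would try is to work in coordinates instead. A regulus in $\S$ has the form $\{\langle(s,s\,\phi(\lambda))\rangle\}$, and I would show that each ruling line is $\{\langle(\mu s_0,\mu s_0^{q^i}k)\rangle_q\}$-type, with the exponent $i$ determined by $b$. Then I would use the result of \cite[Thm 5.13]{csaj16} that the lines of $\D$ come from normal rational curves, which yields exactly $t-1$ types matching the $t-1$ nets.

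With this in place, every ruling line of $[b]$ lies in some $X_{i,k}$, and it lies in a unique one because distinct elements of one net are disjoint. It cannot lie in spaces from two different nets $\A^i,\A^j$, since those meet in a single point. So the families $\F_i$ are pairwise disjoint and cover all lines of $\D$, which proves (b). For the sizes, each $X\in\A^i$ contains $\frac{(q^t-1)(q^{t-1}-1)}{(q^2-1)(q-1)}$ lines, and each $b\in\F_i$ arises from exactly $q^{t-1}+\cdots+q+1$ of them. Summing over the $q^{t-1}+\cdots+q+1$ elements of $\A^i$ gives $|\F_i|=(q^{t-1}+\cdots+q+1)(q^{t-1}-1)/(q^2-1)$. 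This is consistent with the total from \cite{csaj16}.

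\textbf{Parts 2 and 3.} Let $P,Q\in\D$ be distinct and fix $i$. The space $X_{i,k_0}$ meets $[P]$ and $[Q]$ in single points $x,y$, and the line $xy$ gives a line of $\F_i$ through $P$ and $Q$. Any $b\in\F_i$ through $P,Q$ has its ruling line through $x$ lying in $X_{i,k_0}$, and that line must hit $[Q]$ at the unique point $y$. So this line of $\F_i$ is unique, and there is exactly one per family, which proves part~2. Part~3 then follows by double counting. Within one family, a point $P$ pairs with $|\D|-1=q(q^{t-2}+\cdots+1)$ other points, and each line through $P$ accounts for $q$ of them. Hence $P$ lies on $q^{t-2}+\cdots+q+1$ lines of each family, and on $(t-1)(q^{t-2}+\cdots+q+1)$ lines in total.
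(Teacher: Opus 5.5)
\textbf{There is a genuine gap: you never prove part (b).} Your Step 1 is fine, and so are your arguments for Parts 2 and 3. Your direct uniqueness argument in Part 2, using $X_{i,k_0}\cap[Q]=\{y\}$, is a nice alternative to the paper's counting. But all of this rests on part (b).

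With $\F_i$ defined as the set of $b_\ell$, part (b) says the following: if one ruling line $\ell$ of $[b]$ lies in an element of $\A^i$, then all $q^{t-1}+\cdots+q+1$ ruling lines of $[b]$ do. Your Step 2 does not establish this. You propose to find a transversal $(t-1)$-space and apply \cite[Thm 14]{lavr15}, but you concede you cannot construct one. The fallback via coordinates and \cite[Thm 5.13]{csaj16} is only a sketch.

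Without (b), three later steps fail:
\begin{itemize}
\item Your count $|\F_i|=(q^{t-1}+\cdots+q+1)(q^{t-1}-1)/(q^2-1)$ needs every ruling line of $[b_\ell]$ to lie inside an element of $\A^i$.
\item Your disjointness claim does not follow. If one ruling line of $[b]$ lies in an element of $\A^i$ and a \emph{different} ruling line lies in an element of $\A^j$, the fact that elements of distinct nets meet in a single point gives no contradiction.
\item In Part 2, the step ``its ruling line through $x$ lies in $X_{i,k_0}$'' is unjustified.
\end{itemize}

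\textbf{The missing idea is the group used in the paper.} Let $G=\{\sigma_\alpha\,|\,\alpha\in\Fqt^*\}$, where $\sigma_\alpha$ maps $\langle(x,y)\rangle_q$ to $\langle(\alpha x,\alpha y)\rangle_q$.
\begin{itemize}
\item $G$ fixes every element of $\S$. If $\ell$ is a ruling line of $[b]$, the ruling lines of $[b]$ are exactly the lines $\sigma_\alpha(\ell)$.
\item $G$ preserves each replacement net, since $\sigma_\alpha(X_{i,k})=X_{i,\alpha^{1-q^i}k}$ and $\Nm(\alpha^{1-q^i})=1$.
\end{itemize}
Hence every ruling line of $[b_\ell]$ lies in an element of $\A^i$, which gives (b) at once. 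Then $|\F_i|$ equals the value you computed. Disjointness of the families follows, because otherwise a single ruling line would lie in elements of two different nets.

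\textbf{Coverage then comes from counting.} That the families cover all $\Fq$-lines of $\D$ follows by comparing $(t-1)|\F_i|$ with the total from \cite[Prop 4.9]{csaj16}. You cite that result but only use it as a consistency check. With this, the transversal-space route through \cite[Thm 14]{lavr15} is unnecessary.
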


\begin{proof}
A point $P\in \PG(2t-1,q)$ has coordinates  $P=\langle(x,y)\rangle_q$ for some $x,y\in\Fqt$, not both $0$. For $\alpha\in\Fqt^*$,  the map $\sigma_\alpha$ defined by $\sigma_\alpha(P)=\langle(\alpha x,\alpha y)\rangle_q$ is a collineation. Let $G=\{\sigma_\alpha\,|\,\alpha\in\Fqt^*\}$.  It is straightforward  to check that $G$ fixes setwise every element of the regular $(t-1)$-spread $\S$, and acts transitively on the points of a spread element.
Without loss of generality, let $\D=\A=\{(1,\theta)\,|\,\theta\in\Fqt^*,\Nm(\theta)=1\}$ as in (\ref{e1}).
Fix $i\in\{1,\ldots,t-1\}$ and consider the Andr\'e replacement net $\A^i$ as given in (\ref{e2}).
It is straightforward to show that the collineation group $G$ acts transitively on the $(t-1)$-spaces in $\A^i$.

Let $\ell$ be a line contained in a  $(t-1)$-space   of the net $\A^i$. The line $\ell$ meets $q+1$ elements of the regular $(t-1)$-spread $\S$, denote these elements by $\R$. The
 $q^{t-1}+\cdots+q+1$ ruling lines of $(t-1)$-regulus $\R$ are the lines  $\sigma_\alpha(\ell)$, $\sigma_\alpha\in G$.
 As $G$ acts transitively on the $(t-1)$-spaces in $\A^i$,
 we have    $\sigma_\alpha(\ell)$ is a line contained a  $(t-1)$-space of $\A^i$.
 That is, the  ruling lines of $\R$ lie one in each of the   $(t-1)$-spaces in $\A^i$.

 A $(t-1)$-space contains $x=(q^{t-1}+\ldots+q+1)(q^{t-1}-1)/(q^2-1)$ lines, so the number of lines of $\PG(2t-1,q)$ that are contained  in an  element of $\A^i$ is $x(q^{t-1}+\ldots+q+1)$. We have shown that these lines are precisely the ruling lines of $x$  $(t-1)$-reguli denoted $\R_1,\ldots,\R_x\subset\S$. By Lemma~\ref{222}, the reguli $\R_1,\ldots,\R_x$ correspond to a set of $x$ $\Fq$-lines of $\PG(1,q^t)$, we denote this set of $\Fq$-lines by $\F_i$ and note that each $\Fq$-line in $\F_i$ is contained in $\A$. That is, the replacement net
 $\A^i$ is associated with a family $\F_i$ of $x$ $\Fq$-lines which are contained in the Andr\'e set $\A$.

 As noted in Section~\ref{sec:back}, since $t$ is prime, for distinct $i,j$, every $(t-1)$-space in $\A^i$ meets every $(t-1)$-space in $\A^j$ in a unique point.
 In particular, if $\ell$ is a line contained in a $(t-1)$-space of $\A^i$, then $\ell$ is not contained in any $(t-1)$-space of $\A^j$, $j\neq i$. So two families $\F_i,\F_j$ with $i\neq j$ have no common $\Fq$-lines. Hence the families $\F_1,\ldots,\F_{t-1}$ contain a total of $(t-1)x$ $\Fq$-lines of $\A$.
As $t$ prime, by \cite[Prop 4.9]{csaj16} (as quoted above), the total number of $\Fq$-lines contained in $\A$ is $(t-1)x$, and so the families $\F_1,\ldots,\F_{t-1}$ partition the $\Fq$-lines of $\A$.
This proves part \ref{M110a}.

Let $P,Q\in\D$, so $[P],[Q]$ are elements of the $(t-1)$-spread $\S$. If $U$ is a point in $[P]$, then $U$ lies on a unique $(t-1)$-spread $\Sigma_i$ in $\A^i$ for each $i\in\{1,\ldots,t-1\}$. Let $V_i=\Sigma_i\cap[Q]$, and $\ell_i=UV_i$. By part 1,  the set $\{P\in\li\,|\,[P]\cap\ell_i\neq\emptyset\}$  is an $\Fq$-line in family $\F_i$. That is, $P,Q$ lie on at least one $\Fq$-line in each family. As the total number of $\Fq$-lines in each family is $x$ which is equal to ${{q^{t-1}+\cdots+q+1}\choose2}/{{q+1}\choose2}$, it follows that $P,Q$ lie on a unique $\Fq$-line in each family. This proves part 2, the proof of part 3 is similar.
\end{proof}

These
 families of $\Fq$-lines are critical to our results on inherited arcs. In particular, the next result shows how the families $\F_i$ of $\Fq$-lines contained in $\D$ relate to lines of the Andr\'e planes $\P(\D^i)$.
Recall the definition of $\D^i$-blocks given in Construction~\ref{cons1}.

\begin{lemma}\label{odd001}
In $\PG(2,q^t)$,  $q\geq t$, $t\geq3$ prime, let $\D$ be an Andr\'e set of $\li$, and denote the families of $\Fq$-lines contained in $\D$ by $\F_1,\ldots,\F_{t-1}$.  Three non-collinear affine points $A,B,C$ lie in a common  $\D^i$-block for some $i\in\{1,\ldots,t-1\}$ if and only if the $\Fq$-line determined by the three points $AB\cap\li$, $AC\cap\li$, $BC\cap\li$  belongs to family $\F_i$.
\end{lemma}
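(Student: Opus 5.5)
Implicitly, the hypothesis is that the three points $AB\cap\li$, $AC\cap\li$, $BC\cap\li$ lie in $\D$: a $\D^i$-block meets $\li$ only in points of $\D$, and the conclusion concerns an $\Fq$-line of $\D$. We work in the Bruck--Bose representation in $\PG(2t,q)$.

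Take three non-collinear affine points $A,B,C$ with $P_1=AB\cap\li$, $P_2=AC\cap\li$, $P_3=BC\cap\li$, all in $\D$. These three points are distinct, since $A,B,C$ are non-collinear. The affine points $A,B,C$ span a plane $\alpha$ of $\PG(2t,q)$, and $\alpha$ meets $\si$ in a line $m$. The line $m$ contains the three points $AB\cap\si\in[P_1]$, $AC\cap\si\in[P_2]$ and $BC\cap\si\in[P_3]$. By Result~\ref{222}(2), $\alpha$ corresponds to an $\Fq$-plane of $\PG(2,q^t)$ secant to $\li$. Its intersection with $\li$ is the set $b=\{P\in\li\,|\,[P]\cap m\neq\emptyset\}$. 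This is an $\Fq$-line through $P_1,P_2,P_3$, and since three points determine an $\Fq$-line, it is the $\Fq$-line named in the statement.

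\emph{Forward direction.} Suppose $A,B,C$ lie in a common $\D^i$-block $\mathcal X$. Then $[\mathcal X]$ is an affine $t$-space whose completion meets $\si$ in some $\Sigma\in\D^i$. The plane $\alpha$ lies in this completion, so $m\subseteq\Sigma$. By Theorem~\ref{M110}(1a), the $\Fq$-line $b$ determined by $m$ belongs to $\F_i$.

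\emph{Converse.} Suppose $b\in\F_i$. By Theorem~\ref{M110}(1b), every ruling line of the $(t-1)$-regulus $[b]$ lies in a unique element of $\D^i$. The key step is to check that $m$ is one of these ruling lines. The line $m$ meets the three elements $[P_1],[P_2],[P_3]$ of $[b]$, and $[b]$ is a $(t-1)$-regulus by Result~\ref{222}(1). By the defining property of a regulus, any line meeting three of its elements meets all of them, so $m$ is a transversal, that is, a ruling line. Hence $m\subseteq\Sigma$ for some $\Sigma\in\D^i$. Let $\tau=\langle A,\Sigma\rangle$, a $t$-space meeting $\si$ in $\Sigma$, so its affine part is $[\mathcal X]$ for a $\D^i$-block $\mathcal X$. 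Since $B\in\langle A,m\rangle\subseteq\tau$ and likewise $C\in\tau$, the points $A,B,C$ all lie in $\mathcal X$.

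The main subtlety is the converse: we must confirm that $m$ is a genuine ruling line of $[b]$, not merely a line inside $\bigcup[b]$. This is exactly where the three-point transversal property of reguli is used. Non-collinearity of $A,B,C$ is essential: it makes $P_1,P_2,P_3$ distinct and makes $\alpha$ a plane, so that $m$ is a line and the $\Fq$-line $b$ is well defined.
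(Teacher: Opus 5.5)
Your proof is correct and follows the same route as the paper. In both directions you pass to the plane spanned by $A,B,C$ in $\PG(2t,q)$ and take its line at infinity. You then apply Theorem~\ref{M110}(1a) and (1b), and in the converse you build the block as the affine part of $\langle A,\Sigma\rangle$. The one step you spell out that the paper only asserts is why $\alpha\cap\si$ is a ruling line of the regulus $[b]$, which you justify with the three-point transversal property.
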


\begin{proof} Let $A,B,C$ be three non-collinear affine points which lie in a common $\D^i$-block $\K$ for some $i\in\{1,\ldots,t-1\}$. Using the definition of $\D^i$-blocks given in Construction~\ref{cons1}, the set  $[\K]$ in $\PG(2t,q)$ is an affine $t$-space whose projective completion  meets $\si$ in a $(t-1)$-space $\Sigma$ which lies in the replacement net $\D^i$.
Let $\alpha$ be the plane determined by the points $[A],[B],[C]$, so $\alpha$ is contained in the projective completion of $[\K]$. Let $\alpha\cap\si=\ell$, then $\ell\subset\Sigma$.
The sets $\alpha,\ell$ correspond in $\PG(2,q^t)$ to sets denoted $\pi,b$ respectively.
By Result~\ref{222},  $\pi$ is an $\Fq$-plane that meets $\li$ in the $\Fq$-line $b$.
Moreover, $\pi$  is the unique $\Fq$-plane that is secant to $\li$ and contains $A,B,C$. So the three points $AB\cap\li$, $AC\cap\li$, $BC\cap\li$ lie in $b$.
As $\ell\subset\Sigma\in\D^i$, by Theorem~\ref{M110}, $b$ is an $\Fq$-line belonging to family $\F_i$.

Conversely, let $A,B,C$ be three non-collinear affine points and denote the $\Fq$-line  determined by the three points $AB\cap\li$, $AC\cap\li$, $BC\cap\li$ by $b$. Suppose $b$ belongs to family $\F_i$  for some $i\in\{1,\ldots,t-1\}$. Let $\pi$ be the unique $\Fq$-plane that is secant to $\li$ and contains $A,B,C$, so $\pi\cap\li=b$. By Result~\ref{222}, in $\PG(2t,q)$, $[b]$ is a $(t-1)$-regulus, and the plane $[\pi]$ meets $\si$ in a line $\ell$ which is a ruling line of the regulus $[b]$. By Theorem~\ref{M110}, the line $\ell$ lies in a unique $(t-1)$-space $\Sigma$ of the replacement net $\D^i$. The affine points of the $t$-space $\langle \pi,\Sigma\rangle$ correspond to   a $\D^i$-block of $\PG(2,q^t)$ that contains the three points $A,B,C$.
\end{proof}

\subsection{Preliminary counting}

It is straightforward to count the number of non-degenerate conics contains two distinct points, giving the following result.

\begin{result}\label{117} Let $P,Q$ be distinct points in $\PG(2,q)$.
There are exactly $q^2 (q- 1)$ non-degenerate conics containing $P$ and $Q$.
\end{result}


\begin{lemma}\label{104ta}
 In $\PG(2,q^t)$, $q$ odd, $t$ prime, let $\C$ be  a non-degenerate conic secant to $\li$, with $\C\cap\li=\{P,Q\}$.
 \begin{enumerate}
  \item If $t=2$, there are $q-1$ Baer sublines of $\li$ for which $\{P,Q\}$ are a conjugate pair; half contain only interior points of $\C$, half contain only exterior points of $\C$.
 \item If $t\geq 3$, there are $q-1$ Andr\'e sets of $\li$ with  transversal points $\{P,Q\}$; half contain only interior points of $\C$, half contain only exterior points of $\C$.
  \end{enumerate}
\end{lemma}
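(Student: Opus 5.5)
We first normalise coordinates. The group of $\PG(2,q^t)$ fixing $\C$ acts transitively on its pairs of points, so we may take $\C\cap\li=\{P,Q\}$ with $P=(1,0)$, $Q=(0,1)$ on $\li$. Using affine coordinates $(x,y)$, we take $\C$ to be $xy=\mu$ for some $\mu\in\Fqt^*$; a scaling of $y$ reduces this further to $xy=1$. Here the point $(1,k)$ of $\li$ is the point at infinity of lines with slope $k$.

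Next we record the Andr\'e sets with transversal points $\{P,Q\}$. For $t\ge3$ these are exactly the $q-1$ sets $\D_\delta=\{(1,k)\mid \Nm(k)=\delta\}$, $\delta\in\Fq^*$, as stated in Section~\ref{sec:back}. For $t=2$, the Baer sublines for which $P,Q$ are a conjugate pair are the sets $\{(1,k)\mid k^{q+1}=\delta\}$, $\delta\in\Fq^*$. These are the orbits of the order $q+1$ subgroup of the stabiliser, so the same description covers both cases, with $\Nm(k)=k^{q+1}$.

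We then classify each point $(1,k)$ of $\li\setminus\{P,Q\}$ as interior or exterior to $\C$. A point is exterior exactly when it lies on two tangents of $\C$. The tangent to $xy=1$ at $(a,1/a)$ has slope $-1/a^2$. Hence the point at infinity of slope $k$ lies on a tangent if and only if $-k$ is a nonzero square in $\Fqt$; it then lies on two tangents, and otherwise on none. So $(1,k)$ is exterior if and only if $-k$ is a square in $\Fqt$.

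Finally we show that this condition is constant on each $\D_\delta$ and splits the values of $\delta$ evenly. Since $t$ is odd when $t\ge3$ is prime, $-1\in\Fq$ is a square in $\Fqt$ exactly when it is a square in $\Fq$; in any case $-1$ is a fixed element. The key fact is that $k$ is a square in $\Fqt$ if and only if $\Nm(k)$ is a square in $\Fq$. This holds because $\Nm$ is a surjective homomorphism $\Fqt^*\to\Fq^*$, and the squares are the kernel of the quadratic character, with $\eta_{q^t}(k)=\eta_q(\Nm(k))$. Therefore $\eta(-k)=\eta(-1)\,\eta_q(\delta)$ depends only on $\delta$. Exactly half of the $\delta\in\Fq^*$ are squares, so half of the sets $\D_\delta$ consist entirely of exterior points and the other half entirely of interior points. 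The case $t=2$ is the same argument with $\Nm(k)=k^{q+1}$.

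The main point requiring care is the tangent-slope computation together with the identity relating the quadratic character and the norm; the rest of the argument is bookkeeping.
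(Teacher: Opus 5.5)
Your proof is correct, and its skeleton matches the paper's. Both write the relevant Andr\'e sets (and, for $t=2$, the Baer sublines) as the norm fibres $\{(1,\theta)\mid \Nm(\theta)=\delta\}$, $\delta\in\Fq^*$, and show that the quadratic character deciding interior versus exterior is constant on each fibre.

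The differences are in two steps.

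\begin{itemize}
\item \emph{The interior/exterior criterion.} You derive it explicitly from the tangent slopes of $xy=1$: $(1,k)$ is exterior iff $-k$ is a square in $\Fqt$. The paper instead normalises $\C$ to $ayz+bxz+cxy=0$ and reads off the quadratic character of $f(1,\theta,0)=c\theta$. It only needs that $\theta=\tau^n\tau^{(q-1)i}$ with $\tau^{(q-1)i}$ a square.
\item \emph{The half/half split.} You get it directly from $\eta_{q^t}=\eta_q\circ\Nm$, so the type of each fibre is fixed by $\eta_q(\delta)$, up to the constant factor $\eta_{q^t}(-1)$. The paper proves only homogeneity. It then obtains the split by counting: $\li\setminus\{P,Q\}$ has equally many interior and exterior points and is partitioned into $q-1$ fibres of equal size.
\end{itemize}

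Your justification of $\eta_{q^t}=\eta_q\circ\Nm$ is a little terse. The clean reason is that $\eta_q\circ\Nm$ is a nontrivial character of order $2$ of the cyclic group $\Fqt^*$ (nontrivial because $\Nm$ is onto), and so it must equal $\eta_{q^t}$.

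Your route is more self-contained and tells you exactly which $\delta$ give interior sets. The paper's route is shorter and never needs to know which character value means ``interior''.
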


\begin{proof} Let $\li$ have equation $z=0$ and let $\C$ be a non-degenerate conic secant to $\li$. As $\PGammaL(3,q^t)$ is transitive on quadrangles, without loss of generality, assume $\C$   contains the points  $P=(1,0,0)$, $Q=(0,1,0)$ and   $(0,0,1)$. Hence $\C$ has equation $ f(x,y,z)=ayz+bxz+cxy=0$  with $a,b,c\in\Fqt$, $abc\neq0$.
If $t=2$, and $\D$ is a Baer subline of $\li$ for which the points $(1,0,0),(0,1,0)$ are a conjugate pair, then we can write
$\D=\{P_\theta=(1,\theta,0)\,|\,\theta\in\Fqt^*,\Nm(\theta)=\delta\}$ for some $ \delta\in\Fq^*$.
If $t\geq3$, and $\D$ is an Andr\'e set with transversal points $(1,0,0),(0,1,0)$, then we have
$\D=\{P_\theta=(1,\theta,0)\,|\,\theta\in\Fqt^*,\Nm(\theta)=\delta\}$ for some $ \delta\in\Fq^*$. That is, we can use this expression for $\D$ for all $t\geq2$.

Let $\tau$ be a multiplicative generator of $\Fqt^*$, then for $\delta\in\Fq^*$, we can write $\delta=\tau^{n(\frac{q^t-1}{q-1})}$ for some $n\in\{0,\ldots,q-2\}$. So if $\theta\in\Fqt^*$ with $\Nm(\theta)=\delta$, we can write $\theta=\tau^{n}\tau^{(q-1)i}$ for some $i\in\{0,\ldots,q^{t-1}+\cdots+q\}.$
So we can write the points of  $\D$ as $\D=\{Q_i=(1, \tau^{n}\tau^{(q-1)i},0)\,|\, i=0,\ldots,q^{t-1}+\cdots+q\}.$
To determine whether the point $Q_i\in\D$ is interior or exterior to $\C$, we look at $f(Q_i)$ where $f(x,y,z)=ayz+bxz+cxy=0$ is the equation  of $\C$: so we have  $f(Q_i)=c \tau^{n}\tau^{(q-1)i}$. Recall $c\neq0$ and
note that as $q$ is odd, $\tau^{(q-1)i}$ is a square in $\Fqt$ for all $i$.
So $f(Q_i)$ is a square in $\Fqt$  if and only if $c \tau^{n}$ is a square in $\Fqt$.
Hence either $f(Q_i)$ is a square  in $\Fqt$ for all points $Q_i\in\D$, or $f(Q_i)$ is a nonsquare  in $\Fqt$ for all points $Q_i\in\D$.
 Thus all the points of $\D$ are either all exterior to $\C$ or all interior to $\C$.  The result now follows as half the points on $\li\setminus\{P,Q\}$ are interior to $\C$, and half are exterior.
 \end{proof}

\begin{lemma}\label{104tb}
 In $\PG(2,q^t)$, $q$ odd, $t$ prime.
 \begin{enumerate}
 \item\label{104tbpart1} If $t=2$, let $\D$ be a Baer subline of $\li$.
 \begin{enumerate}
 \item There are $\frac12  q^5 (q+1) ^2(q- 1)$ non-degenerate conics that meet $\li$ in two points contained in $\D$;  the points of $\D\setminus\C$ are  all interior to half of these conics, and all exterior to the other half.
 \item There are $\frac12  q^5 (q+1) (q- 1)^2$ non-degenerate conics that   meet $\li$ in two points which are conjugate with respect to $\D$;  the points of $\D$ are  all interior to half of these conics, and all exterior to the other half.
 \end{enumerate}
  \item\label{104tbpart2}  If $t\geq 3$, let $\D$ be an Andr\'e set of $\li$ with transversal points $P,Q$. There are $q^{2t} (q^t- 1)$ non-degenerate conics containing $P$ and $Q$. The points of $\D$ are all interior to half of these conics, and all exterior to the other half.
  \end{enumerate}
\end{lemma}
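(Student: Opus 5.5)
The plan is to combine the pair count of Result~\ref{117} (applied in $\PG(2,q^t)$, so $q^{2t}(q^t-1)$ conics through two given points) with the interior/exterior dichotomy of Lemma~\ref{104ta}. Throughout, $q$ odd guarantees that every non-degenerate conic has well-defined interior and exterior points, as used in Lemma~\ref{104ta}.

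\textbf{Part~\ref{104tbpart2}.} Let $t\geq3$.
\begin{itemize}
\item Result~\ref{117} applied in $\PG(2,q^t)$ gives exactly $q^{2t}(q^t-1)$ non-degenerate conics through the transversal points $P,Q$. Any such conic meets $\li$ exactly in $\{P,Q\}$.
\item By Lemma~\ref{104ta}(2), for each such conic $\C$, the $q-1$ Andr\'e sets with transversal points $P,Q$ split into two types. Half consist entirely of interior points of $\C$ and half consist entirely of exterior points.
\item Count incidences $(\C,\D')$ with $\D'$ an Andr\'e set with transversal points $P,Q$ and all points of $\D'$ interior to $\C$. Each conic contributes $(q-1)/2$, giving $q^{2t}(q^t-1)(q-1)/2$ incidences in total.
\item Use the stabiliser of $P,Q$ and $\li$ to transfer this count to a single Andr\'e set. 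Maps $(x,y,z)\mapsto(x,\lambda y,z)$ preserve $\li$, fix $P$ and $Q$, and send the Andr\'e set with norm $\delta$ to the one with norm $\Nm(\lambda)\delta$. Since the norm is surjective onto $\Fq^*$, this group permutes the $q-1$ Andr\'e sets transitively, and it preserves the conic set and interiority. So every Andr\'e set is interior-type for the same number of conics, namely $q^{2t}(q^t-1)/2$, which is half.
\item The same argument, or subtraction, handles the exterior case.
\end{itemize}

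\textbf{Part~\ref{104tbpart1}(b).} Let $t=2$ and fix the Baer subline $\D$.
\begin{itemize}
\item Count unordered conjugate pairs $\{P,Q\}$ with respect to $\D$. The involution on $\li\setminus\D$ has no fixed points, so there are $(q^2+1-(q+1))/2=q(q-1)/2$ such pairs.
\item For each pair, Result~\ref{117} gives $q^4(q^2-1)$ conics through $P,Q$. The total is
\[\tfrac12 q(q-1)\cdot q^4(q^2-1)=\tfrac12 q^5(q+1)(q-1)^2.\]
\item For a fixed pair, there are $q-1$ Baer sublines with $\{P,Q\}$ conjugate, and $\D$ is one of them. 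Lemma~\ref{104ta}(1) together with the same transitivity/double-counting argument (multiplying $y$ by $\lambda$ permutes the sublines $\Nm(\theta)=\delta$ transitively) shows that $\D$ is interior-type for exactly half of the conics through $P,Q$.
\end{itemize}

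\textbf{Part~\ref{104tbpart1}(a).} Again $t=2$.
\begin{itemize}
\item There are $\binom{q+1}{2}$ pairs $P,Q\in\D$, each with $q^4(q^2-1)$ conics. The total is $\tfrac12 q^5(q+1)^2(q-1)$.
\item Lemma~\ref{104ta} does not cover this case, so I need a direct computation. Coordinatise as in Lemma~\ref{104ta}, with $\C: ayz+bxz+cxy=0$ and $P=(1,0,0)$, $Q=(0,1,0)$.
\item A Baer subline through $P,Q$ has the form $\{(1,\theta\mu,0)\mid \theta\in\Fq\}\cup\{Q\}$ for some $\mu\in\Fqq^*$.
\item Then $f(1,\theta\mu,0)=c\mu\theta$. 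Since $\theta\in\Fq^*$ is a square in $\Fqq$, the value is a square exactly when $c\mu$ is. So all points of $\D\setminus\{P,Q\}$ have the same type, decided by the square class of $c\mu$.
\item Fix $\D$, i.e.\ fix $\mu$, and vary $c$ over the conics. Scaling $c$ by a nonsquare through a collineation fixing $P,Q$ and $\D$ swaps the two types. For example $(x,y,z)\mapsto(x,y,\nu z)$ fixes $\li$ pointwise and maps the conic with parameters $(a,b,c)$ to one with $c$ scaled relative to $a,b$ by $\nu^{-1}$. This gives exactly half of each type.
\end{itemize}

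The main obstacle is the halving step. The key point is that a symmetry fixing the relevant objects must swap interior and exterior, or permute the Andr\'e sets transitively. I expect the diagonal collineations above to handle this once checked carefully.
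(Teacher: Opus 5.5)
Your counts in all three parts are correct, and so are your halving arguments for part~\ref{104tbpart2} and part~1(b). They take a different route from the paper. The paper fixes $\D=\{(1,\theta,0):\Nm(\theta)=1\}$, writes a general conic through $P,Q$ as $dz^2+ayz+bxz+cxy=0$, notes $f(1,\theta,0)=c\theta$ with $\theta$ a square, and appeals to half the choices of $c$. You instead fix the conic, take the dichotomy from Lemma~\ref{104ta}, and move it to a fixed $\D$ by double counting. The key tool is the transitive action of $(x,y,z)\mapsto(x,\lambda y,z)$ on the $q-1$ candidate sets. Your version only ever compares points against one fixed conic, so the halving never depends on how the equation is scaled.

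\textbf{The gap is the halving step in part~1(a).} The map $g:(x,y,z)\mapsto(x,y,\nu z)$ fixes every point of $\li$. So for each $R\in\D$, $R$ is interior to $\C$ if and only if $R=g(R)$ is interior to $g(\C)$. Hence $g$ sends conics of one type to conics of the \emph{same} type. More generally, no collineation stabilising $\D$ and $\{P,Q\}$ can swap the two types, so no choice of diagonal map will work.

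The computation misleads you because the square class of $c$, or of $f(R)$, is not an invariant of the conic: the equation is only defined up to a scalar. The correct test is that a point $R\notin\C$ is exterior if and only if $-\det(M)f(R)$ is a nonzero square, where $M$ is the symmetric matrix of $f$. For $ayz+bxz+cxy$ this quantity is $-abc^2\mu\theta/4$. Its square class, that of $-ab\mu$, is unchanged when you rescale $c$. Separately, the form $ayz+bxz+cxy$ presupposes $(0,0,1)\in\C$, which you cannot assume for all conics once $\D$ is fixed.

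\textbf{How to repair it.} Use the same device you used in the other parts, moving $\D$ rather than the conic.
Fix a conic $\C$ through $P,Q$. The $q+1$ Baer sublines through $P,Q$ are indexed by $\mu\in\Fqq^*/\Fq^*$. They partition $\li\setminus\{P,Q\}$, and each has uniform type by your own computation. Since $\li$ carries $(q^2-1)/2$ interior points of $\C$, exactly $(q+1)/2$ of these sublines are interior-type. The maps $y\mapsto\lambda y$ permute these sublines transitively ($\mu\mapsto\lambda\mu$) while fixing $P$ and $Q$. So each subline is interior-type for exactly half of the $q^4(q^2-1)$ conics through $P,Q$.

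Alternatively, count directly. Normalise $c=1$, so the conics through $P,Q$ are $dz^2+ayz+bxz+xy=0$ with $d\neq ab$, and $\det M=(ab-d)/4$. The type of $\D\setminus\{P,Q\}$ is then the square class of $(d-ab)\mu$. For fixed $a,b$, the value $d-ab$ runs over all of $\Fqq^*$, so exactly half the conics give each type.
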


\begin{proof}
In each case,  the number of non-degenerate conics follows from  Result~\ref{117}.
For the counts in part 1, let
 $\D$ be a Baer subline of $\li$ in $\PG(2,q^2)$. For part 1a),  the number of choices for two distinct points $P,Q\in\D$ is ${q+1}\choose2$, so by Result~\ref{117}, the number of non-degenerate conics that contain two points of $\D$ is ${{q+1}\choose2}q^4(q^2-1)$). For part 1b),
the number of choices for two points $P,Q$ a conjugate pair with respect to $\D$ is $\frac12(q^2-q)$, so by Result~\ref{117}, the number of non-degenerate conics that meet $\li$ in a pair of points that are conjugate with respect to  $\D$ is $\frac12(q^2-q)q^4(q^2-1)$. For the count in part 2, let  $\D$ be an Andr\'e set of $\li$ in $\PG(2,q^t)$, so $\D$ has  a unique pair of transversal points,  hence by Result~\ref{117}, the number of non-degenerate conics that contain the transversal points of  $\D$ is $q^{2t} (q^t- 1)$.

For the second statement in part 1(a), without loss of generality, assume $\D=\{P_k=(1,k,0)\,|\,k\in\Fq\cup\{\infty\}\}$, and let $\C$ be a non-degenerate conic containing the two points  $P=(1,0,0)$, $Q=(0,1,0)$. So $\C$ has equation $f(x,y,z)=dz^2+ayz+bxz+cxy=0$ for some $a,b,c,d\in\Fqq$ with
 $c(ab - d c) \neq 0$. For $P_k\in\D\setminus\{P,Q\}$, that is, $k\in\Fq\setminus\{1\}$, we have $f(P_k)=ck$, which is a square in $\Fqq$ if and only if $c$ is a  square in $\Fqq$. Note that $c\neq0$, so exactly half the choices for $c$ are squares in $\Fqq$. Thus for half of the non-degenerate conics $\C$ containing $P,Q$, the numbers $f(P_k)$, $k\in\Fq\setminus\{1\}$ are all squares in $\Fqq$ (for the other half, they  are non-squares).
Thus for half of the non-degenerate conics $\C$ containing $P,Q$, the points of $\D\setminus\{P,Q\}$ are all interior to $\C$; and for the other half,  the points of $\D\setminus\{P,Q\}$ are all exterior to $\C$.

The proofs for the second statements in parts 1b) and 2 can be combined.
Without loss of generality, assume  $P=(1,0,0)$, $Q=(0,1,0)$ and
$\D=\{P_\theta=(1,\theta,0)\,|\,\theta\in\Fqt^*,\Nm(\theta)=1\}$. Let $\tau$ be a multiplicative generator of $\Fqt^*$, then
 for $\theta\in \Fqt^*,\Nm(\theta)=1$, we have $\theta= \tau^{(q-1)i}$, $i=0,\ldots, q^{t-2}+\cdots+q$, so we can write $P_\theta=(1,\tau^{(q-1)i},0)$.
Let $\C$ be a non-degenerate conic that contains the points $P,Q$, so $\C$ has equation $f(x,y,z)=dz^2+ayz+bxz+cxy=0$ for some $a,b,c,d\in\Fqt$ with
 $c(ab - d c) \neq 0$.
Note that as $q$ is odd, $\tau^{(q-1)i}$ is a square in $\Fqt$ for all $i$.
So  for $P_\theta\in\D$, we have
$f(P_\theta)=f(1,\tau^{(q-1)i},0)=c \tau^{(q-1)i}$ is a square in $\Fqt$ if and only if $c$ is a square in $\Fqt$. Note that $c\neq0$, so exactly half the choices for $c$ are squares in $\Fqt$. Thus for half of the non-degenerate conics $\C$ containing $P,Q$, the points of $\D$ are all interior to $\C$; and for the other half,  the points of $\D$ are all exterior to $\C$.
 \end{proof}

\section{Inherited arcs in Andr\'e planes}

In this section we determine when a conic of $\PG(2,q^t)$, $t\geq3$ prime, inherits to an arc of an Andr\'e plane.
We use the following notation. Recall that if $\D$ is an Andr\'e set of $\li$ of $\PG(2,q^t)$, then there are $q-1$ replacement Andr\'e nets denoted $\D^i$, $i\in\{1,\ldots,t-1\}$. For $i\in\{1,\ldots,t-1\}$, let $\P(\D^i)$ denote the Andr\'e plane obtained by performing an Andr\'e replacement  of $\PG(2,q^t)$ using the net $\D^i$. Suppose $\C$ is a non-degenerate conic of $\PG(2,q^t)$. Then the affine points of $\C$ correspond to a set of affine points of $\P(\D^i)$, we denote this set of affine points by $\D_i(\C)$. In this section we answer the question of when $\D_i(\C)$ is an arc of the Andr\'e plane $\P(\D^i)$ for $i=1,\ldots,t-1$.
Critical to the interior/exterior point argument we use is the following result.

\begin{result}\label{resultodd} \cite[Thm 1]{Korch1}
In $\PG(2, q)$, $q$ odd, let $\C$ be a non-degenerate conic and $\ell$  a line. If $\ell$ is not a tangent of $\C$ and $\{P_1 , P_2 , P_3 \}$ are points on $\ell$ containing either three or exactly one exterior point, then there are exactly two triangles $\{A_1,A_2,A_3\}$ inscribed in $\C \setminus \ell$ such that $A_i A_j \cap \ell = P_k$, where $i, j, k$ is a permutation of $1, 2, 3$. In the other cases, for example, when the three points are interior, there is no $\{A_1 , A_2 , A_3 \}$ with this property.
\end{result}

\begin{lemma}\label{100t}
In $\PG(2,q^t)$, $q$ odd, $q\geq t$, $t\geq3$ prime, let $\C$ be a non-degenerate conic secant or exterior to $\li$ and $\D$ an Andr\'e set of $\li$.

 \begin{enumerate}
\item If $\D$ contains an exterior point of $\C$, then  $\D_{i}(\C)$ is not an arc of  the Andr\'e plane $\P(\D^i)$, $i=1,\dots,t-1$.
\item If $\D$ contains no exterior points of $\C$, then  $\D_{i}(\C)$ is  an arc of  the Andr\'e plane $\P(\D^i)$, $i=1,\dots,t-1$.
\end{enumerate}
\end{lemma}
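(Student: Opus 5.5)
The approach is to reduce the arc condition to a statement about triangles inscribed in $\C$, and then settle that statement with Result~\ref{resultodd} and Lemma~\ref{odd001}. The lines of $\P(\D^i)$ are of two kinds. The lines of $\AG(2,q^t)$ meeting $\li$ outside $\D$ meet the affine part of $\C$ in at most two points. The other lines are the $\D^i$-blocks. So $\D_i(\C)$ fails to be an arc exactly when three affine points $A_1,A_2,A_3$ of $\C$ lie in a common $\D^i$-block. Three points of a non-degenerate conic are non-collinear in $\PG(2,q^t)$, so Lemma~\ref{odd001} applies. It shows this happens if and only if the three points $P_k=A_iA_j\cap\li$ lie in $\D$ and the $\Fq$-line through them belongs to $\F_i$.

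Each $P_k$ lies on a secant of $\C$ through two affine points of $\C$, so $P_k\notin\C$. By hypothesis $\C$ is secant or exterior to $\li$, so $\li$ is not a tangent of $\C$ and Result~\ref{resultodd} is available.

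\emph{Part 2.} Suppose $\D$ contains no exterior point of $\C$, and suppose three affine points of $\C$ lie in a $\D^i$-block. Then their three side-points $P_1,P_2,P_3$ lie in $\D$ and are not on $\C$, so all three are interior points on the non-tangent line $\li$. Result~\ref{resultodd} says that no triangle inscribed in $\C\setminus\li$ has sides through three interior points in this way, which is a contradiction. Hence $\D_i(\C)$ is an arc, for every $i$.

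\emph{Part 1.} Let $E\in\D$ be an exterior point and fix $i$. The aim is to find an $\Fq$-line $b\in\F_i$ through $E$ that contains three points $P_1,P_2,P_3$, none on $\C$, with either exactly one or exactly three of them exterior.
\begin{itemize}
\item By Theorem~\ref{M110}(\ref{M110c}), $E$ lies on $q^{t-2}+\cdots+q+1\geq q+1\geq 4$ lines of $\F_i$.
\item At most two points of $\D$ lie on $\C$, namely the points of $\C\cap\li$. By Theorem~\ref{M110}(\ref{M110b}), each such point lies on exactly one line of $\F_i$ through $E$, so we can choose $b\in\F_i$ through $E$ containing no point of $\C$.
\item Then all $q+1\geq4$ points of $b$ are interior or exterior, and $b$ has $e\geq1$ exterior points.
\item If $e\geq3$, take three exterior points. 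Otherwise $b$ has at least $q-1\geq2$ interior points, so take $E$ together with two interior points, giving exactly one exterior point.
\end{itemize}
Result~\ref{resultodd} then gives a triangle $A_1,A_2,A_3$ inscribed in $\C\setminus\li$ with $A_iA_j\cap\li=P_k$. These are affine points of $\C$, and the $\Fq$-line they determine is $b\in\F_i$. By Lemma~\ref{odd001} they lie in a common $\D^i$-block, so $\D_i(\C)$ has three collinear points in $\P(\D^i)$ and is not an arc.

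The main obstacle is the selection step in Part 1. One must rule out the awkward mixtures of interior, exterior and conic points on the chosen $\Fq$-line, and this is exactly where the counts of Theorem~\ref{M110} and the assumption $q\geq t\geq 3$ are used.
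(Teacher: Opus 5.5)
Your proposal is correct and follows essentially the paper's argument: Lemma~\ref{odd001} reduces collinearity in $\P(\D^i)$ to the $\Fq$-line of side-points lying in $\F_i$, and Result~\ref{resultodd} then gives a contradiction in part 2 and the required triangle in part 1. The only differences are minor. You invoke Theorem~\ref{M110}(\ref{M110b}) explicitly to avoid the points of $\C\cap\li$, and you pick the triple by a case split on the number of exterior points of $b$, whereas the paper applies pigeonhole to three points of $b\setminus\{P_1\}$.
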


\begin{proof} First suppose $\D$ contains a point $P_1$ which is exterior to $\C$. Let $i\in\{1,\ldots,t-1\}$.
By Theorem~\ref{M110}(\ref{M110c}), there are $q^{t-2}+\cdots+q+1$ $\Fq$-lines containing $P_1$ and belonging to family $\F_i$.
As $\C\cap\li\leq2$, there exists an $\Fq$-line $b$ which belonging to family $\F_i$, contains $P_1$ and is disjoint from $\C$. From the $q\geq3$ points in $b\setminus P_1$, we can choose  $P_2,P_3\in b$
 either both exterior or both interior to $\C$.
By Result \ref{resultodd}, there exists  three points $A_1,A_2,A_3\in\C\setminus\li$ such that $A_i A_j \cap \li = P_k$, where $i, j, k$ is a permutation of $1, 2, 3$.
As $b$ is the unique $\Fq$-line determined by $P_1,P_2,P_3$, it follows from
Lemma~\ref{odd001} that
 there is a $\D^i$-block $\alpha$ containing  the affine points $A_1,A_2,A_3$.
The affine points of $\alpha$ form a line
 of $\P(\D^i)$ that meets $\D_{i}(\C)$ in at least three points, namely $A_1,A_2,A_3$. Hence  $\D_{i}(\C)$ is not an arc in the Andr\'e plane $\P(\D^i)$.

For part 2, suppose $\D$ contains no exterior points of $\C$, we give a proof  by contradiction.
Suppose $\D_{i}(\C)$ is not an arc of  the Andr\'e plane $\P(\D^i)$.
Then there exist affine  points $A_1,A_2,A_3\in\C$ which are collinear in the Andr\'e plane $\P(\D^i)$.
That is, $A_1,A_2,A_3$ lie in a common  $\D^i$-block $\alpha$.
By Lemma~\ref{odd001}, the unique $\Fq$-line (denoted $b$) determined by the three points   $P_1=A_2A_3\cap\li$, $P_2=A_1A_3\cap\li$, $P_3=A_1A_2\cap\li$ belongs to family $\F_i$. In particular, $P_1,P_2,P_3$ lie in $\D$.
 As $\D$ contains no exterior points of $\C$, and $P_1,P_2,P_3$ are not in $\C$, they are interior points of $\C$.
This contradicts Result~\ref{resultodd}. Hence $\D_{i}(\C)$ is  an arc of  the Andr\'e plane $\P(\D^i)$.
\end{proof}

%

Note that the bound on $q$ in part 3 of the following theorem relates to a result in \cite{Lav} which looks at the existence of $\Fq$-lines containing all interior points of a non-degenerate conic.

\begin{theorem}\label{200} In $\PG(2,q^t)$, $q$ odd, $q\geq t$, $t\geq3$ prime, let $\C$ be a non-degenerate conic and let $\D$ be an Andr\'e set of $\li$.  Perform an Andr\'e replacement using the Andr\'e replacement net $\D^i$; and let $\D_i(\C)$ denote the set of points in $\P(\D^i)$ corresponding to the affine points of $\C$.
\begin{enumerate}
\item\label{200a}
 If $\C$ is tangent to $\li$, then $\D_{i}(\C)$ is not an arc of  the Andr\'e plane $\P(\D^i)$, $i=1,\dots,t-1$.
 \item\label{200c}
Suppose $\C$ is exterior to  $\li$, and suppose $q$  satisfies (at least) one of the following: either \textup{(i)} $q\geq4t^2-8t+2$; or \textup{(ii)}
$q$ prime and $q>2t^2 -(4-2\sqrt{3})t+
(3-2\sqrt{3})$; or \textup{(iii)} $t=3$ and $q\geq 3$.
Then $\D_{i}(\C)$ is not an arc of  the Andr\'e plane $\P(\D^i)$, $i=1,\dots,t-1$.
   \item\label{200b}
 If $\C$ is secant to $\li$ with $\C\cap\li=\{P,Q\}$, then
 \begin{enumerate}
 \item If $\D$ is one of the $\frac{q-1}2$ Andr\'e sets with transversal points $\{P,Q\}$ which contain only interior points, then   $\D_{i}(\C)$ is  a  $(q^t-1)$-arc of  the Andr\'e plane $\P(\D^i)$, $i=1,\dots,t-1$.
Furthermore, the arc can be completed to an oval of $\P(\D^i)$  by adding $P$ and $Q$.
 \item Otherwise $\D_{i}(\C)$ is not an arc of  the Andr\'e plane $\P(\D^i)$, $i=1,\dots,t-1$.
 \end{enumerate}
   \end{enumerate}
\end{theorem}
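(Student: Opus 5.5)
The plan is to handle the three parts separately. Parts 2 and 3 reduce almost entirely to Lemma~\ref{100t}, so the real work is in part 1 and in the exterior case.

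\textbf{Part 3 (secant case).} Let $\C\cap\li=\{P,Q\}$.
\begin{itemize}
\item If $\D$ has transversal points $P,Q$, then by Lemma~\ref{104ta} the points of $\D$ are either all interior or all exterior to $\C$. In both situations $P,Q\notin\D$.
\item If they are all interior, $\D$ contains no exterior points, so Lemma~\ref{100t}(2) shows that $\D_i(\C)$ is an arc. It has $q^t-1$ points, since these are the affine points of $\C$.
\item If they are all exterior, Lemma~\ref{100t}(1) shows that $\D_i(\C)$ is not an arc.
\item If the transversal points of $\D$ are not $\{P,Q\}$, we must show that $\D$ contains an exterior point, and then Lemma~\ref{100t}(1) applies. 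Suppose instead that $\D\setminus\{P,Q\}$ consists only of interior points.
\end{itemize}
To rule out the last possibility, I would compare $\D$ with an Andr\'e set $\D'$ with transversal points $P,Q$ consisting of exterior points, which exists by Lemma~\ref{104ta}. Then $\D$ misses the $\frac{q-1}{2}(q^{t-1}+\cdots+1)$ exterior points. Since $\D$ has $q^{t-1}+\cdots+1$ points and there are only about $q^t/2$ interior points, this is not immediately contradictory. A better route is to use that $\D$ contains $\Fq$-lines through any two of its points (Theorem~\ref{M110}). One then needs an $\Fq$-line of $\D$ missing $P,Q$ with three interior points forming a triangle configuration. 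I would instead aim directly for the non-arc conclusion: if $P\in\D$ or $Q\in\D$, take $A_1,A_2$ on $\C$ and an $\Fq$-line of family $\F_i$ through $P$ and suitable points. This yields a $\D^i$-block containing two affine points of $\C$ on a line through $P$ together with a third affine point. This ``explanation why case (3b) fails'' is the delicate step.

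\textbf{Oval completion.} The replaced lines in $\P(\D^i)$ have points at infinity in $\D$, and the lines through $P$ and $Q$ are unchanged Desarguesian lines. Therefore $P$ and $Q$ each lie on exactly one tangent, namely the original tangent, and the line $PQ=\li$ meets the set only in $\{P,Q\}$. So adding $P,Q$ gives $q^t+1$ points with no three collinear.

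\textbf{Part 1 (tangent case).} Let $\C\cap\li=\{T\}$, so every point of $\li\setminus\{T\}$ is exterior. Choose an $\Fq$-line $b$ of family $\F_i$ in $\D$ avoiding $T$; one exists by the counting in Theorem~\ref{M110}(3). Every $\Fq$-line of $\D$ is contained in $\D$, and $T$ lies on at most $(t-1)(q^{t-2}+\cdots+1)$ of the lines of $\D$. Then $b$ contains three exterior points, so Result~\ref{resultodd} gives a triangle inscribed in $\C$. By Lemma~\ref{odd001}, its vertices lie in a $\D^i$-block, so $\D_i(\C)$ is not an arc.

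\textbf{Part 2 (exterior case).} By Lemma~\ref{100t}(1) it suffices to show that $\D$ contains an exterior point. Suppose instead that all points of $\D$ are interior. Then $\D$ contains $\Fq$-lines, so there would be an $\Fq$-subline of $\li$ consisting entirely of interior points of an exterior conic. Through a character-sum (Weil) estimate, as in \cite{Lav}, this is impossible under the hypotheses (i), (ii) or (iii). This is the main obstacle; I would quote \cite{Lav} for the precise bounds, with $t=3$ handled by the explicit analysis there.
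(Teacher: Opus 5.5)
Your handling of the secant case when $\D$ has transversal points $P,Q$, including the completion to an oval, matches the paper. However, the ``otherwise'' case of part 3(b) is left open, and this is a genuine gap.

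The missing idea is Lemma~\ref{102t}: two distinct Andr\'e sets meet in at most $2(q^{t-2}+\cdots+q+1)$ points.
\begin{itemize}
\item Let $\mathcal X_1,\ldots,\mathcal X_{q-1}$ be the Andr\'e sets with transversal points $P,Q$. Together with $\{P,Q\}$ they partition $\li$.
\item By Lemma~\ref{104ta}, the interior points of $\li$ are exactly the points of the $\frac{q-1}2$ interior $\mathcal X_j$.
\item If the transversal points of $\D$ are not $\{P,Q\}$, then $\D\neq\mathcal X_j$ for every $j$. So $\D$ has at most $\frac{q-1}2\cdot 2(q^{t-2}+\cdots+1)=q^{t-1}-1$ interior points.
\item Since $|\D\cap\{P,Q\}|\le 2$, $\D$ has at least $q^{t-2}+\cdots+q>0$ exterior points, and Lemma~\ref{100t}(1) finishes.
\end{itemize}
This one count covers every subcase. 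Your suggested direct construction cannot work. A line of $\PG(2,q^t)$ through $P\in\C$ contains at most one further point of $\C$, so there is no $\D^i$-block ``containing two affine points of $\C$ on a line through $P$''. It also says nothing about the subcase $P,Q\notin\D$ with other transversal points.

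There are two smaller issues.

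\textbf{Part 1.} You cannot invoke Result~\ref{resultodd}, which explicitly assumes the line is not tangent to $\C$; the paper instead cites Korchm\'aros \cite[Theorem 1]{Korch2}. Your route is easily repaired:
\begin{itemize}
\item Map $(\C,\li)$ to the parabola $yz=x^2$ and $z=0$, with $T=(0,1,0)$.
\item The chord through $(a,a^2,1)$ and $(b,b^2,1)$ meets $\li$ at $(1,a+b,0)$.
\item For distinct points $(1,m_k,0)$, the system $b+c=m_1$, $a+c=m_2$, $a+b=m_3$ has a unique solution with $a,b,c$ distinct, since $q$ is odd.
\item Applying this to three points other than $T$ on any $\Fq$-line of $\F_i$, Lemma~\ref{odd001} gives a self-contained proof.
\end{itemize}

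\textbf{Part 2(iii).} \cite{Lav} only covers $q>14$; for $q=9,11,13$ the paper uses the tables of \cite{LavR}. For $q\in\{3,5,7\}$ the available data do not exclude all-interior $\Fq$-sublines. They only show there are fewer than $q^2+q+1$ of them. The paper then uses that $\D$ contains $2(q^2+q+1)$ $\Fq$-lines (Theorem~\ref{M110}), which forces an exterior point in $\D$. Your claim that one all-interior subline is ``impossible'' under (iii) is therefore stronger than what is available.
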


\begin{proof}
Part \ref{200a} is proved by Korchm\'aros in \cite[Theorem 1]{Korch2}.

For part \ref{200c}, suppose $\C$ is disjoint from $\li$. We show that $\D$ contains an exterior point of $\C$.
By Theorem~\ref{M110}(\ref{M110a}), the $\Fq$-lines contained in the Andr\'e set $\D$ can be partitioned into  $t-1$ non-empty families $\F_i$, $i=1,\ldots,t-1$. Let $\ell$ be an $\Fq$-line belonging to family $\F_i$ for some  $i\in\{1,\ldots,t-1\}$. If (i) $q\geq4t^2-8t+2$ or (ii) $q$ is prime and $q>2t^2 -(4-2\sqrt{3})t+
(3-2\sqrt{3})$,  we can use \cite[Thm 2.4]{Lav}, which shows that  $\ell$ contains at least one exterior point of $\C$. Hence $\D$ contains an exterior point of $\C$, so by Lemma~\ref{100t},
$\D_{i}(\C)$ is not an arc of  the Andr\'e plane $\P(\D^i)$  for any $i\in\{1,\ldots,t-1\}$.

For case (iii), suppose $t=3$, we show that $\D$ always contains an exterior point of $\C$. Let $\ell$ be an $\Fq$-line contained in $\D$ ($\ell$ exists by Theorem~\ref{M110}). By  \cite[Thm 2.4]{Lav}, if $q>14$, then $\ell$ contains at least one exterior point of $\C$. If $q=9,11,13$, then by the table in \cite[Section 3]{LavR}, $\ell$ contains at least one exterior point of $\C$.
 Hence if $q\geq9$, then $\D$ contains an exterior point of $\C$.
 If $q\in\{3,5,7\}$, then by  the table in \cite[Section 3]{LavR}, the number of $\Fq$-lines on $\li$ that contain entirely interior points is strictly less than $q^2+q+1$. By Theorem~\ref{M110}, the number of $\Fq$-lines contained in $\D$ is $2(q^2+q+1)$, so $\D$ contains an exterior point of $\C$. Hence by Lemma~\ref{100t},
$\D_{i}(\C)$ is not an arc of  the Andr\'e plane $\P(\D^i)$  for any $i\in\{1,2\}$.


For part \ref{200b}, let $\C$ be a non-degenerate conic with $\C\cap\li=\{P,Q\}$. First suppose $\D$ is an Andr\'e set of $\li$ with transversal points $\{P,Q\}$; by Lemma~\ref{104ta} there are $q-1$ choices for $\D$, half contain only  interior points of $\C$  and half contain only exterior points of $\C$.
If $\D$ contains only exterior points of $\C$, then by Lemma~\ref{100t}, $\D_{i}(\C)$ is not an arc of  the Andr\'e plane $\P(\D^i)$  for any $i\in\{1,\ldots,t-1\}$.
If $\D$ contains only interior points of $\C$, then by Lemma~\ref{100t}, $\D_{i}(\C)$ is  an arc of  the Andr\'e plane $\P(\D^i)$ for all   $i\in\{1,\ldots,t-1\}$. In this case, let $i\in\{1,\ldots,t-1\}$ and let $\ell$ be a line of $\P(\D^i)$  through $P$ or $Q$ (distinct from $\li$), so $\ell$ is also a line of $\PG(2,q^t)$. So in $\PG(2,q^t)$ there is at most one point of $\C\setminus\{P,Q\}$ on $\ell$, hence in  $\P(\D^i)$  there is  at most one point of $\D_{i}(\C)$ on $\ell$. Hence, $\D_{i}(\C)\cup \{P,Q\}$ is a complete arc of size $(q^t+1)$, proving part \ref{200b}(a).

Next we suppose that the transversal points $E,F$ of $\D$ satisfy $|\{E,F\}\cap\{P,Q\}|\leq1$. So if we denote the $q-1$ Andr\'e sets with transversal points $P,Q$ by $\mathcal X_1,\ldots,\mathcal X_{q-1}$, then  we have $\D\neq \mathcal X_i$, $i=1,\ldots,q-1$.
We show that $\D$ contains an exterior point of $\C$ by computing a bound on the number of interior points of $\D$. By  Lemma~\ref{104ta} half of the $\mathcal X_i$ contain only  interior points of $\C$  and the other half contain only exterior points of $\C$. By Lemma~\ref{102t}, $|\D\cap\mathcal X_i|\leq2(q^{t-2}+\cdots +q+ 1)$.
 As $\{P,Q,\mathcal X_1,\ldots\mathcal X_{q-1}\}$ is a partition of the points of $\li$,  the number of interior points of $\D$ is at most
 $\frac{q-1}2 (2(q^{t-2}+\cdots +q+ 1))= q^{t-1}-1$.
 As $|\D\cap\{P,Q\}|\leq2$,  $\D$ contains at least $x=(q^{t-1}+q^{t-2}+\cdots+q+1)-(q^{t-1}-1)-2
 =q^{t-2}+q^{t-3}+\cdots+q $
exterior points. As $t\geq 3$, we have $x>0$, that is, $\D$ contains an exterior point of $\C$. It follows from Lemma~\ref{100t} that  $\D_i(\C)$ is not an arc of  the Andr\'e plane $\P(\D^i)$ for any $i\in\{1,\ldots,t-1\}$, proving part \ref{200b}(b). (Note that if $t=2$, then $x=0$,  and it is indeed possible to have an inherited arc with $|\D\cap\C|=2$  in the $\PG(2,q^2)$ case, although it is not in the $\PG(2,q^t)$, $t\geq3$ case.) This completes the proof of part \ref{200b}.
\end{proof}

\section{Counting inherited arcs}

A natural question to ask is how many conics of $\PG(2,q^t)$ inherit to arcs. This has not been computed for the Hall plane (the case when $t=2$) so we first look at
 how many conics from $\PG(2,q^2)$ inherit to arcs of the Hall plane.

\begin{theorem}\label{119}
In $\PG(2,q^2)$, $q$ odd, $q>5$.
\begin{enumerate}
\item Let $\D$ be a Baer subline of $\li$ and derive $\PG(2,q^2)$ using $\D$ to get the Hall plane $\H(\D)$. There are exactly $\frac12   q^6 (q ^2- 1) $ conics of $\PG(2,q^2)$ that inherit to an arc of $\H(\D)$; $\frac14 q^5 (q + 1) (q - 1)^2$ of these can be completed to ovals of the Hall plane.
\item Let $\C$ be a non-degenerate conic secant to $\li$. There are exactly $q$ derivation sets $\D$ of $\li$ such that deriving $\PG(2,q^2)$ using $\D$ leads to an inherited arc of the associated Hall plane $\H(\D)$.
\end{enumerate}
\end{theorem}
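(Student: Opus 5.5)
The plan is to read off both counts from the classification in Result~\ref{120}, using Lemma~\ref{104ta}(1) and Lemma~\ref{104tb}(\ref{104tbpart1}) for the enumeration. Since $q>5$, Result~\ref{120} applies. Its parts 1 and 2 show that tangent and exterior conics never inherit to arcs. So a conic inherits to an arc of $\H(\D)$ exactly when it is secant to $\li$, with $\C\cap\li=\{P,Q\}$, and one of the following holds.
\begin{itemize}
\item Case (3a): $P,Q$ are conjugate with respect to $\D$ and all points of $\D$ are interior to $\C$. The arc then completes to an oval.
\item Case (3b): $P,Q\in\D$ and all points of $\D\setminus\{P,Q\}$ are interior to $\C$. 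The arc is then a complete $(q^2-1)$-arc, so it is not contained in an oval.
\end{itemize}
The two cases are disjoint, because a conjugate pair does not lie in $\D$.

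For part 1, Lemma~\ref{104tb}(\ref{104tbpart1})(b) counts the conics in case (3a). They are half of the $\frac12 q^5(q+1)(q-1)^2$ conics meeting $\li$ in a pair conjugate with respect to $\D$, which gives $\frac14 q^5(q+1)(q-1)^2$. By Result~\ref{120}(3a) these are exactly the inherited arcs that complete to ovals. Lemma~\ref{104tb}(\ref{104tbpart1})(a) similarly counts case (3b) as half of $\frac12 q^5(q+1)^2(q-1)$, namely $\frac14 q^5(q+1)^2(q-1)$. Adding the two counts gives
\[
\tfrac14 q^5(q+1)(q-1)\big((q-1)+(q+1)\big)=\tfrac12 q^6(q^2-1),
\]
which is the total claimed in part 1.

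For part 2, fix $\C$ with $\C\cap\li=\{P,Q\}$ and count the Baer sublines $\D$ in each case. For case (3a), Lemma~\ref{104ta}(1) gives $(q-1)/2$ Baer sublines for which $\{P,Q\}$ is a conjugate pair and all points are interior.

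For case (3b), I first count the Baer sublines through $P$ and $Q$. As in the proof of Lemma~\ref{104tb}, I take coordinates with $P=(1,0,0)$, $Q=(0,1,0)$ and $\C\colon dz^2+ayz+bxz+cxy=0$. The Baer sublines through $P,Q$ are then
\[
\D_\lambda=\{(1,\lambda k,0)\,|\,k\in\Fq\cup\{\infty\}\},
\]
one for each coset $\lambda\Fq^*$ in $\Fqq^*/\Fq^*$. This gives $q+1$ sublines, which agrees with the count $q(q^2+1)\binom{q+1}2/\binom{q^2+1}2=q+1$. For $k\in\Fq^*$ we have $f(1,\lambda k,0)=c\lambda k$. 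Every element of $\Fq^*$ is a square in $\Fqq$, so all these values are squares exactly when $c\lambda$ is a square, and all are non-squares otherwise. Because $\Fq^*$ lies inside the subgroup of squares of $\Fqq^*$, exactly half of the $q+1$ cosets consist of squares. Hence exactly $(q+1)/2$ of the sublines $\D_\lambda$ have all points of $\D_\lambda\setminus\{P,Q\}$ interior to $\C$.

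The total for part 2 is therefore $\frac{q-1}2+\frac{q+1}2=q$. The only step needing care is this coset-parity count, together with checking that the $q+1$ sublines through $P,Q$ are exactly the $\D_\lambda$ and that each is counted once. The rest is arithmetic built on the earlier lemmas.
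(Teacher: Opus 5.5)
Your proof is correct. Part~1 follows the paper's proof, and so does the count $\frac{q-1}2$ of conjugate-pair sublines in part~2. The difference is in how you count the Baer sublines through $P,Q$ whose other points are all interior to $\C$.

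The paper gets $\frac12(q+1)$ by double counting pairs $(b,\O)$, where $b$ is a Baer subline and $\O$ a conic meeting $b$ in two points with the rest of $b$ interior. It equates $q(q^2+1)\cdot\frac14q^5(q+1)^2(q-1)$ with $\frac12(q^2+1)q^6(q^2-1)\cdot y$. This reuses Lemma~\ref{104tb}(\ref{104tbpart1})(a) and needs no new computation. However, it tacitly assumes that $y$ is the same for every secant conic. That is true, because the stabiliser of $\li$ in $\mathrm{PGL}(3,q^2)$ is transitive on conics secant to $\li$, but the paper does not say so.

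You instead fix $\C$ and list the $q+1$ sublines $\lambda\Fq\cup\{\infty\}$ through $P,Q$. You then use that $\Fq^*$ has index $\frac{q+1}2$ in the squares of $\Fqq^*$. This is the computation in the proof of Lemma~\ref{104tb}(\ref{104tbpart1})(a) with the roles of conic and subline exchanged. It is self-contained and needs no uniformity assumption.

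You use one fact implicitly, exactly as the paper does in Lemmas~\ref{104ta} and~\ref{104tb}: for fixed $\C$, whether a point $X\notin\C$ is interior depends only on the square class of $f(X)$. Since each square class accounts for $\frac{q+1}2$ of the sublines, you never need to know which class is the interior one.
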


\begin{proof}
To prove part 1, let $\D$ be a Baer subline of $\li$ and derive $\PG(2,q^2)$ using $\D$ to get the Hall plane $\H(\D)$. If $\C$ is a non-degenerate conic, then
Result~\ref{120} lists the cases where $\D(\C)$ is an arc of $\H(\D)$.
So we need to count the conics $\C$ secant to $\li$ with $\C\cap\li=\{P,Q\}$ where either (a)
$P,Q\notin\D$, $P,Q$ are conjugate with respect to $\D$, and all points of $\D$ are interior with respect to $\C$; or (b) $P,Q\in\D$ and all points of $\D$ are interior with respect to $\C$.
 The number of conics of type (a) is  $\frac14 q^5 (q + 1) (q - 1)^2$ by Lemma~\ref{104tb}(1b). Further, the corresponding inherited arcs  can be completed to  ovals of the Hall plane by Result~\ref{120}.
 The number of conics of type (b) is  $\frac14 q^5 (q + 1)^2 (q - 1)$ by Lemma~\ref{104tb}(1a). Further, the corresponding inherited arcs  are  complete $(q^2-1)$-arcs of the Hall plane by Result~\ref{120}.
Summing these two numbers gives the result of part 1.

To prove part 2, let $\C$ be a conic secant to $\li$ with $\C\cap\li=\{P,Q\}$.  By Result~\ref{120}, we need to count Baer sublines $\D$ which either (c) $P,Q\notin\D$, $P,Q$ are conjugate with respect to $\D$ and all points of $\D$ are interior  with respect to $\C$; or (d) $P,Q\in\D$ and all points of $\D$ are interior with respect to $\C$.

The number of Baer sublines of type (c) is $\frac{q-1}2$
by Lemma~\ref{104ta}.
To count the number $y$ of Baer sublines of type (d)
 we use count in two ways the number $x$ of pairs $(b,\O)$ where $b$ is a Baer subline of $\li$ and $\O$ is a non-degenerate conic with $|b\cap\O|=2$ and all points of $b$ interior with respect to $\O$.   By Result \ref{117}, the number of non-degenerate conics secant to $\li$ is
${q^2+1 \choose 2} q^4 (q^2- 1)
= \frac12 (q^2+1 ) q^6 (q^2- 1).$
 So we have $x=
  \frac12 (q^2+1 ) q^6 (q^2- 1)    \cdot  y$.
  The number of Baer sublines on $\li$ is $q(q^2+1)$.
The number of choices for $\O$ follows from  Lemma~\ref{104tb}(\ref{104tbpart1}a), so we have $x=(q^2+1)q  \cdot \frac14  q^5 (q+1)^2  (q - 1)$.
Equating the two expressions for $x$ and solving gives $y=\frac12   (q+1)$.  Summing the number of Baer sublines of types (c) and (d) gives part 2.
\end{proof}

The next result counts inherited arcs
when $t\geq3$.  Note that in  $\PG(2,q^t)$, $q\geq t$ odd, $t\geq3$ prime,
if $\C$ be a non-degenerate conic secant to $\li$, then Theorem~\ref{200} shows that there are exactly $\frac{q-1}2$ Andr\'e sets $\D$ of $\li$  which lead to an inherited arc of the Andr\'e plane $\P(\D^i)$. The next result looks at a fixed Andr\'e set and counts the number of conics which inherit to arcs.

\begin{theorem}\label{118t} In $\PG(2,q^t)$, $q$ odd, $q\geq t$, $t\geq3$ prime, suppose $q$  satisfies (at least) one of the following: either $q\geq4t^2-8t+2$; or
$q$ prime and $q>2t^2 -(4-2\sqrt{3})t+
(3-2\sqrt{3})$; or $t=3$ and $q\geq 3$.
 Let $\D$ be an Andr\'e set of $\li$ and perform an Andr\'e replacement using the Andr\'e replacement net $\D^i$ to get the Andr\'e plane $\P(\D^i)$.
 For $i\in\{1,\ldots,t-1\}$, there are exactly $\frac12 q^{ 2 t} (q^t- 1) $ conics of $\PG(2,q^t)$ which inherit to arcs of the Andr\'e plane $\P(\D^i)$.
\end{theorem}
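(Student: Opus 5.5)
Fix $\D$ with transversal points $E,F$ and fix $i\in\{1,\ldots,t-1\}$. The plan is to use Theorem~\ref{200} as a complete classification and then count the conics that fall into the single surviving case.

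First, we rule out the other cases. By Theorem~\ref{200}(\ref{200a}), a non-degenerate conic tangent to $\li$ never inherits to an arc of $\P(\D^i)$. The hypotheses on $q$ in Theorem~\ref{118t} are exactly those of Theorem~\ref{200}(\ref{200c}), so a conic exterior to $\li$ never inherits either. Hence only conics secant to $\li$ can inherit.

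Next, consider a conic $\C$ secant to $\li$ with $\C\cap\li=\{P,Q\}$. By Theorem~\ref{200}(\ref{200b}), $\D_i(\C)$ is an arc if and only if $\D$ has transversal points $\{P,Q\}$ and all points of $\D$ are interior to $\C$. Since an Andr\'e set has a unique pair of transversal points, this forces $\{P,Q\}=\{E,F\}$. The conics we must count are therefore exactly the non-degenerate conics through $E$ and $F$ for which the points of $\D$ are interior. The condition does not depend on $i$, so the count is the same for every $i$.

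Finally, we count these conics. Lemma~\ref{104tb}(\ref{104tbpart2}) (equivalently, Result~\ref{117}) gives $q^{2t}(q^t-1)$ non-degenerate conics containing $E$ and $F$. The same lemma states that $\D$ consists entirely of interior points for exactly half of them. This leaves $\frac12 q^{2t}(q^t-1)$ conics, as claimed.

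There is no real obstacle: the work was done in Theorem~\ref{200}. The only points needing care are:
\begin{itemize}
\item checking that the hypotheses on $q$ in Theorem~\ref{118t} match those needed to exclude exterior conics;
\item noting that the "otherwise" clause in Theorem~\ref{200}(\ref{200b}(b)) covers every secant conic whose intersection with $\li$ is not $\{E,F\}$, including the case where $\C\cap\li=\{E,F\}$ and $\D$ consists of exterior points;
\item making sure the conic count does not double count, which holds because each conic through $E,F$ is counted once with its interior/exterior status determined by the square class of $c$.
\end{itemize}
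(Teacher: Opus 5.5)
Your proposal is correct and is essentially the paper's proof. Theorem~\ref{200} reduces the count to the non-degenerate conics through the transversal points $E,F$ of $\D$ for which $\D$ consists of interior points, and Lemma~\ref{104tb}(\ref{104tbpart2}) says these are exactly half of the $q^{2t}(q^t-1)$ non-degenerate conics through $E,F$.

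One minor aside on your last bullet: the square class of $c$ is not well defined on a conic, since rescaling its equation by a non-square flips it; the rescaling-invariant quantity that governs interior/exterior here is the square class of $cd-ab$. You only use the count from Lemma~\ref{104tb}(\ref{104tbpart2}), so this does not affect your argument.
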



\begin{proof} Denote the transversal points of $\D$  by  $E$, $F$. By Theorem~\ref{200}, the only conics which inherit to arcs of $\P(\D^i)$ are the non-degenerate conics that contain $E,F$ and for which $\D$ consists of interior points of $\C$.
By Lemma \ref{104tb}(2),
the number of non-degenerate conics that contain $E,F$ is $q^{2 t} (q^t- 1)$ and
for half of these conics, $\D$ consists of exterior points only; and for half of these conics, $\D$ consists of interior  points only. Hence  there are exactly $\frac12 q^{2 t} (q^t- 1) $ conics of $\PG(2,q^t)$ which inherit to arcs of $\P(\D^i)$.
\end{proof}

%
%
%

\begin{thebibliography}{99}

\bibitem{andr54} J.\ Andr\'e. \"Uber nicht-Desarguessche Ebenen mit
 transitiver Translationgruppe. {\em Math.\ Z.},  60 (1954)
 156--186.
%
\bibitem{BHJ-even} S.G. Barwick, A.M.W. Hui and W.-A. Jackson.  Inherited conics in Andr\'e planes of even order. 
Preprint.

%
\bibitem{BJ2012}
S.G. Barwick and W.-A. Jackson.
Sublines and subplanes of PG$(2,q^3)$ in the Bruck-Bose representation in PG$(6,q)$.
{\em Finite Fields Appl.},
18 (2012)
93--107.


\bibitem{barmar}
S.G. Barwick and D.J. Marshall.
Conics and multiple derivation.
{\em Discrete Math.},
312 (2012)
1623--1632.
%
\bibitem{BKNS}
A. Blokhuis, I. Kov\'{a}cs, G.P. Nagy and T. Sz\"{o}nyi.
Inherited conics in Hall planes.
\emph{Discrete Math.},
342 (2019)
1098--1107.
%

\bibitem{magma}
W. Bosma, J. Cannon, and C. Playoust.
The Magma algebra system. I. The user language,
{\em J. Symbolic Comput.},
    24 (1997)
    235--265.
%
\bibitem{bruc69} R.H.\ Bruck.  Construction problems of finite
projective planes.  {\em Conference on Combinatorial Mathematics
and its Applications},  University of North Carolina Press, (1969)
426--514.
%
\bibitem{bruck70}
R.H. Bruck. Some relatively unknown ruled surfaces in projective space.
Arch. Inst. Grand-Ducal Luxembourg Sect.
\emph{Sci. Nat. Phys. Math. N.S.}
(1970) 361--376.
%
\bibitem{bruc73a}
R.H. Bruck. Circle geometry in higher dimensions.
{\em A Survey of Combinatorial Theory},
eds. J.N. Amsterdam Srivastava et al,
(1973) 69--77.
%
\bibitem{bruc73b}
 R.H. Bruck.
 Circle geometry in higher dimensions. II.
 {\em Geom. Dedicata},
 2 (1973) 133--188.
%
%
\bibitem{bruc64}
R.H. Bruck and R.C. Bose.
The construction of  translation planes from projective spaces.
{\em J.\ Algebra},
 1 (1964)
 85--102.
%
%
%
\bibitem{casa22}
V. Casarino, G. Longobardi and C. Zanella.
Scattered linear sets in a finite projective line and translation planes.
\emph{Linear Algebra Appl.},
650 (2022)  286--298.


\bibitem{Cher}
W. Cherowitzo.
The classification of inherited hyperconics in Hall planes of even order.
\emph{European J. Combin.},
31 (2010)
81--86.

\bibitem{csaj}
B. Csajb\'ok, G. Marino and F. Zullo.
New maximum scattered linear sets of the projective line.
\emph{Finite Fields App.},
 54 (2018) 133--150.
%
\bibitem{csaj16}
B. Csajb\'ok and C. Zanella.
On scattered linear sets of pseudoregulus type in $\PG(1,q^t)$.
\emph{Finite Fields App.},
    41 (2016) 34--54.
%

%

%
%
\bibitem{dona14}
G. Donati and N. Durante.
Scattered linear sets generated by collineations between pencils of lines,
{\em J.  Algebraic. Comb.},
40 (2014)
1121--1134.
%
\bibitem{ferr03}
S. Ferret and L. Storme.
Results on maximal partial spreads in $\PG(3,p^3)$ and on related minihypers.
{\em Des. Codes Cryptogr.},
 29 (2003)
 105--122.
%
%
%
%
\bibitem{GlynnStein}
D.G. Glynn and G.F. Steinke.
On conics that are ovals in a Hall plane.
\emph{Europ. J. Combin.},
14 (1993), 521--528.
\bibitem{grim}
G.G. Grimaldi, S. Gupta, G. Longobardi, R. Trombetti.
A geometric characterization of known maximum scattered linear sets of $\PG(1, q^n)$.
http://arxiv.org/abs/2405.01374v1

%
%
%
%
%
 \bibitem{JJ2008}
 V. Jha and N.L. Johnson.
 A new class of translation planes constructed by hyper-regulus replacement.
 \emph{J. Geom.},
 90 (2008)
 83--99.
%
%
\bibitem{johnson2003}
N.L. Johnson.
Hyper-reguli and non-Andr\'e quasi subgeometry partitions of projective spaces.
\emph{J. Geom.},
 78 (2003)
 59--82.
%
%
\bibitem{john07}
N.L. Johnson, V. Jha and M. Biliotti.
{\em Handbook of Finite
  Translation Planes.} Chapman and Hall/CRC 2007.
%
%
%
%
\bibitem{Korch1}
G. Korchm\'{a}ros.
Ovali nei piani di hall di ordine dispari.
\emph{Atti Accad. Naz. Lincei, Rend.},
56 (1974)
315--317.
%
%
%
\bibitem{Korch2}
G. Korchm\'aros.
Inherited arcs in finite affine planes.
\emph{J. Combin. Theory Ser. A},
42 (1986)
140--143.
%
%
%
%
\bibitem{Lav}
M. Lavrauw.
Sublines of prime order contained in the set of internal points of a conic.
\emph{Des. Codes Cryptogr.},
38 (2006)
113--123.

%
%
\bibitem{LavR}
M. Lavrauw and M. Rodgers.
Classification of 8-dim rank 2 commutative semifields.
\emph{Adv. Geom.},
19 (2019)
57--64.
\bibitem{lavr15}
M. Lavrauw, J, Sheekey and C. Zanella.
On embeddings of minimum distance of $\PG(n,q)\times\PG(n,q)$.
\emph{Des. Codes Cryptogr.},
74 (2015)
427--440.
\bibitem{lavr10}
M. Lavrauw and G. Van de Voorde.
On linear sets on a projective line.
{\em Des. Codes Cryptogr.},
56 (2010)
89--104.


%
\bibitem{lavr14a}
M. Lavrauw and G. Van de Voorde.
Field Reduction and linear sets in finite geometry.
\emph{Topics in Finite Fields,} AMS Contemporary Math., ed
by G. Kyureghyan, G. L. Mullen, and A. Pott (2015).

%
%

%
%
%
%


%
%
%
%
\bibitem{LMPT14}
G. Lunardon, G. Marino, O. Polverino and R. Trombetti.
Maximum scattered linear sets of pseudoregulus type
and the Segre variety ${\cal S}_{n,n}$.
{\em  J. Algebraic. Comb.},
39 (2014)
807--831.

%
%

\bibitem{luna01}
G. Lunardon and O. Polverino.
Blocking sets and derivable partial spreads.
\emph{J. Algebr. Comb.},
14 (2001)
49--56.
%
%
%
\bibitem{OKeefePas}
  C.M. O'Keefe and A.A. Pascasio.
Images of conics under derivation.
  \emph{Discr. Math.},
  151 (1996)
   189--199.

%
\bibitem{OKeefePasPen}
C.M. O'Keefe, A.A. Pascasio, and T. Pentilla.
Hyperovals in Hall planes. \emph{Eur. J. Combin.},
13 (1992)
195--199.
%
%
\bibitem{ostrom70}
T.G. Ostrom.
Finite translation planes.
Lecture Notes in Mathematics. 158,
Springer-Verlag, New York, 1970.

%
\bibitem{ostrom}
T.G. Ostrom.
Hyper-reguli.
\emph{J. Geom.},
48 (1993)
157--166.
%
%
\bibitem{segre}
B. Segre.
Teoria di Galois, fibrazioni proiettive e geometrie non desarguesiane.
\emph{Ann. Mat. Pura Appl.},
 64 (1964)
1--64.
%
%
\bibitem{sheek2018}
J. Sheekey.
A new family of linear maximum rank distance codes.
\emph{Adv. Math. Commun.},
10 (2016)
475--488.
%

%
%
\bibitem{Szon}
T. Sz\"{o}nyi.
Arcs and $k$-sets with large nucleus set in Hall planes.
\emph{J. Geom.},
34 (1989)
187--194.




\end{thebibliography}
\end{document}